%  LaTeX support: latex@mdpi.com
%  In case you need support, please attach any log files that you could have, and specify the details of your LaTeX setup (which operating system and LaTeX version / tools you are using).

%=================================================================

% LaTeX Class File and Rendering Mode (choose one)
% You will need to save the "mdpi.cls" and "mdpi.bst" files into the same folder as this template file.

%=================================================================

\documentclass{amsart}

\usepackage{amsfonts,amstext,amssymb, mathtools, comment,amsmath,amsthm, graphicx}

\newcommand{\levy}{L\'{e}vy }
\newcommand{\p}{{\mathbb P}}
\newcommand{\e}{{\mathbb E}}
\newcommand{\D}{{\mathrm d}}
\newcommand{\ep}{{\epsilon}}

\renewcommand{\a}{\alpha}
\renewcommand{\b}{\beta}
\renewcommand{\aa}{{\bs \alpha}}
\newcommand{\bb}{{\bs \beta}}
\newcommand{\qq}{{\bs q}}

\newcommand{\ind}[1]{\mbox{\rm 1}_{\{#1\}}}

\newcommand{\matI}{\mathbb{I}}
\newcommand{\matO}{\mathbb{O}}

\newcommand{\1}{{\bs 1}}
\newcommand{\ii}{{\rm i}}

\newcommand{\bs}{\boldsymbol}

\newcommand{\pii}{{\bs \pi}}

\newcommand{\rF}{{\hat F}}

\newtheorem{theorem}{Theorem}[section]
\newtheorem{proposition}{Proposition}[section]
\newtheorem{lemma}{Lemma}[section]
\newtheorem{corollary}{Corollary}[section]
\newtheorem{remark}{Remark}[section]
\newtheorem{definition}{Definition}[section]

\begin{document}
\title[Splitting and time reversal]{Splitting and time reversal % at the infimum and last exit times 
for Markov additive processes}
\author[J. Ivanovs]{Jevgenijs Ivanovs}
\email{jevgenijs.ivanovs@unil.ch}
\address{University of Lausanne}

\keywords{\levy process, path decomposition, time reversal, Wiener-Hopf factorization, last exit, conditioned to stay positive}
\thanks{Financial support by the Swiss National Science Foundation Project 200020 143889 is gratefully acknowledged.}
\begin{abstract}
We consider a Markov additive process with a finite phase space and study its path decompositions at the times of extrema, first passage and last exit.
For these three families of times we establish splitting conditional on the phase, and provide various relations between the laws of post- and pre-splitting processes using time reversal. These results offer valuable insight into behaviour of the process, and while being structurally similar to the \levy process case, they demonstrate various new features. As an application we formulate the Wiener-Hopf factorization, where time is counted in each phase separately and killing of the process is phase dependent. 
Assuming no positive jumps, we find concise formulas for these factors, and also characterize the time of last exit from the negative half-line using three different approaches, which further demonstrates applicability of path decomposition theory.
\end{abstract}

\maketitle

\section{Introduction}
In the theory of Markov processes \emph{path decomposition} or \emph{splitting time} theorems have a long history, see e.g.~\cite{millar_survey,pitman_levy_systems} and references therein. For a nice Markov process $X_t,t\geq 0$ these results are concerned with a study of random times $T$ such that the post-$T$ process $X_{T+t},t\geq 0$ has a Markovian structure and is independent of the events before $T$ given $X_T$ (and sometimes also $X_{T-}$). 
The main examples of such $T$ are stopping times, the time of the infimum and the last exit time from $(-\infty,a]$, all of which fall in the category of so-called randomized coterminal times~\cite{millar_survey}. 
The first example is rather trivial, and the latter two have the same structure, i.e.~the time of the infimum can be seen as the last exit time from a random interval $(-\infty,\inf_t X_t]$. Thus it is not surprising that they lead to the same law for the post-$T$ process: the law of $X$ conditioned to stay above a certain level. %; this level is $\underline X=X_T\wedge X_{T-}$ and $a$, respectively. 

For a \levy process the theory of path decomposition becomes especially compelling and rich with various relations, see~\cite{millar_zero_one,duquesne} and also~\cite[Ch.\ VII.4]{bertoin}, for the one-sided case. It provides a valuable insight in the behaviour of the process and proves to be useful in various computations. In particular, the celebrated Wiener-Hopf factorization is just an application of splitting at the infimum, see~\cite{millar_zero_one, pitman_fluct} or~\cite[Lem.\ VI.6]{bertoin}.
% and for the time of the infimum $T$ one arrives at the following fundamental decomposition which forms the basis of the celebrated Wiener-Hopf factorization: the process $X_{T+t}-\underline X$ is independent of $X_t,t\in[0,T]$ or of $X_t,t\in[0,T-]$, where the first case corresponds to \levy processes $X$ which do not jump up at $T$, see~\cite{millar_zero_one, pitman_fluct} or~\cite[Lem.\ VI.6]{bertoin}. Moreover, this (incremental) post-infimum process has the law of $X$ conditioned to stay positive; the precise meaning of this will be explained later. Finally, let $T$ be the last exit time of $X$ from $(-\infty,a]$ and assume that $X_T=a$ with positive probability. Then according to~\cite{millar_zero_one} on the event $X_T=a$ the process $X_{T+t}-a$ is independent of $X_t,t\in[0,T]$ and has the law of $X$ conditioned to stay positive. 
The aim of this work is to develop the corresponding theory for Markov additive processes (MAPs), which are the processes with stationary and independent increments given the current phase. These processes often appear in finance, queueing and risk theories~\cite{APQ}, and recently were found to be instrumental in the analysis of real self-similar Markov processes, see ~\cite{kyprianou_appendix}. Even though MAPs closely resemble \levy processes, their increments are not exchangeable, and hence one may expect certain difficulties as well as apriory non-obvious differences from the \levy case. 
%In the following we provide an overview of the results and relate them to the literature.

\subsection{Overview of the results}
The results of this paper are formulated for defective MAPs,
where general \emph{phase-dependent killing} is allowed. This is important in applications since it permits tracking \emph{time spent in different phases} by way of joint Laplace transforms, see Section~\ref{sec:defective}.
In the following we provide a brief overview of the main results and point out the main differences/difficulties as compared to the case of a \levy process, see also Figure~\ref{fig:main}. 
\begin{itemize}
\item Splitting and conditioning:
\begin{itemize}
\item Section~\ref{sec:split_inf} presents \emph{splitting at the infimum} and defines the law $\p_i^\uparrow$ of the post-infimum process given phase~$i$. There may be a phase switch at the infimum, see Figure~\ref{fig:scenarios}, and it is crucial to condition on the correct phase. Moreover, it is convenient to choose this phase in a slightly different way for infimum and supremum.
\item Section~\ref{sec:positive} shows that $\p_i^\uparrow$ corresponds to the original process \emph{conditioned to stay positive}. For certain phases $i$ this process starts from a positive level and possibly different phase.
This initial distribution is addressed in Proposition~\ref{prop:init_law}.
\item Section~\ref{sec:split_last} shows that \emph{splitting} also holds \emph{at the last exit time} $\sigma_a$ from an interval $(-\infty,a]$ given the exit is continuous,
and then the law of the post-$\sigma_a$ process is $\p_i^\uparrow$. Thus post-infimum and post-$\sigma_a$ processes have the same law given that they start in the same phase~$i$ (continuous exit can be realized only in some phases).
\end{itemize}
\item Time reversal and equivalence of laws:
\begin{itemize}
\item Section~\ref{sec:rev_zeta} discusses \emph{time reversal at the life time} of the process. 
In general, the life time depends on the process and hence the time reversal identity is different from the classical formula concerning reversal at an independent time.% Both identities are based on the dual law $\hat\p$.
\item Section~\ref{sec:rev_inf} expresses the law of the process \emph{reversed at the infimum} via the dual process conditioned to stay non-positive, see Theorem~\ref{thm:rev_inf}. Importantly, there are new non-trivial constants in this identity.
\item Section~\ref{sec:rev_last} shows that the process \emph{reversed at the last exit} is closely related to the dual process considered up to the last exit (on the events of continuous exit), see Theorem~\ref{thm:rev_last}.
There is another set of constants which have a clear probabilistic interpretation.
\item Section~\ref{sec:rev_first} completes the list of reversal identities by showing that \emph{reversal at the first passage} (when continuous) results in the dual process conditioned to stay positive and considered up to the last exit, see Theorem~\ref{thm:rev_first}. Again there is a need for appropriate scaling.
\end{itemize}
\item \emph{Wiener-Hopf factorization}: The statement of the factorization  and its relation to the results in~\cite{kaspi,klusik_palm,kyprianou_appendix} can be found in Section~\ref{sec:WH}. These works consider MAPs killed at independent exponential times, which would be natural for \levy processes, but is not fully satisfactory for MAPs. Moreover, we allow for counting time in each phase separately. Finally, we provide some necessary corrections to the factorizations in the literature.
\item \emph{Spectrally negative MAPs}: Section~\ref{sec:spNeg} further develops the above results in the important case when the process does not have positive jumps. In particular, the Wiener-Hopf factors are given by compact explicit formulas, which are then compared to the results in~\cite{dieker_mandjes,kyprianou_palm,ivanovs_thesis}. 
\item Application: Section~\ref{sec:sn_last} studies the times spent in different phases up to the last exit~$\sigma:=\sigma_0$ from the negative half-line (together with the phase at $\sigma$). Our theory allows for three different approaches: (i) is based on splitting at~$\sigma$ and law equivalence of post-$\sigma$ and post-infimum processes, (ii) is based on the formula for time reversal at~$\sigma$, and (iii) is based on splitting at the infimum and the relation between the post-infimum process considered up to its last exit and the dual process reversed at its first passage, see Theorem~\ref{thm:rev_first}.
\end{itemize}

The above list shows that splitting and time reversal results for MAPs closely resemble analogous results for \levy processes, which should help in understanding the present theory and its application. Importantly, there are non-trivial differences in each of these results, and we made an attempt to clearly stress these points. 
Some parts of the proofs are rather standard and so we keep them very brief. Many results, however, cannot be obtained by a simple generalization of the arguments in the \levy case. This is especially true when it comes to time reversal, see e.g.~Proposition~\ref{prop:rev_last}.
It should also be noted that Nagasawa's reversal theory for Markov processes~\cite{nagasawa} does not provide easy to use tools in our case, and instead we employ probabilistic arguments.

The subject of this work is rather technical, and our main goal is to present the results in a clear and comprehensible way while treating the details with care. Thus most of the results are supplemented with comments and remarks. Moreover, it is often convenient to draw a picture such as Figure~\ref{fig:main}.
\begin{figure}[h!]
\centering
\includegraphics[width=.4\linewidth]{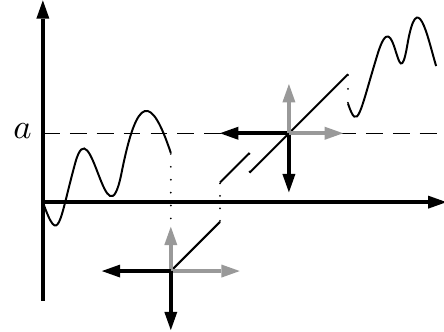}
\caption{Schematic sample path: splitting and time reversal.}
\label{fig:main}
\end{figure}
This picture illustrates splitting at the infimum, and the last exit $\sigma_a$ from $(-\infty,a]$ in a continuous way. Both post-splitting processes lead to the same law $\p_i^\uparrow$ corresponding to the process conditioned to stay positive, see grey axes. Time reversal at these times is depicted by small black axes - consider turning the paper upside down.

\section{Preliminaries and notation}\label{sec:prelim}
Consider a Markov additive process $(X_t,J_t),t\geq 0$, where $X$ is the additive component taking values in $\mathbb R$ and $J$ is the phase taking values in a finite set~$E$.
The defining property of a MAP reads: for all $T\geq 0$ given $\{J_T=i\}, i\in E$ the process $(X_{T+t}-X_T,J_{T+t}),t\geq 0$ is independent of $(X_t,J_t),t\in[0,T]$ and has the same law as the original process $(X,J)$ given $\{J_0=i\}$.
Moreover, we allow for killed processes by adding an absorbing state $(\infty,\dagger)$, and write $\zeta:=\inf\{t\geq 0:J_t=\dagger\}\in[0,\infty]$ to denote the life time of the process. We require the above defining property to hold, but we do \emph{not} add $\dagger$ to $E$.
% and require that the above property holds true with the convention $\delta_X-x=\delta_X$ for any $x\in\mathbb R$. 
Observe that $J_t$ is a (possibly transient) Markov chain on $E$ and assume that it is irreducible.
Finally, any MAP can be seen as a Markov-modulated \levy process: the additive component $X$ evolves as some \levy process $X^i$ while $J=i$ and has a jump distributed as some $U_{ij}$ at the phase switch from $i$ to $j$, where all these components and $J$ are independent. We refer to~\cite{APQ} and~\cite{ivanovs_thesis} for basic facts about MAPs.

It is common to write $\e[\cdot;J_t]$ for an $|E|\times|E|$ matrix with elements $\e_i(\cdot;J_t=j)=\e(\cdot\ind{J_t=j}|J_0=i)$.
It is well known that there exists a matrix-valued function $\Psi(\a)$ such that $\e[e^{\a X_t};J_t]=e^{\Psi(\a)t}$ for all $t\geq 0$ and at least purely imaginary~$\a$. It is noted that $\Psi(\a)$ is the analogue of the Laplace exponent of a \levy process, and that it characterizes the law of the corresponding MAP. Finally, we define first passage times:
\begin{align*}
&\tau_x:=\{t\geq 0: X_t>x\}, &\tau_x^-:=\{t>0: X_t<x\}.
\end{align*}

\subsection{Defective processes and time spent in different phases}\label{sec:defective}
Note that $\Psi(0)$ is the transition rate matrix of $J$, and hence $\qq:=-\Psi(0)\1$ is a vector with transition rates into $\dagger$ which are called the \emph{killing rates}.
Throughout this work we assume that our MAP is killed/defective:
\begin{align*}&\text{Assumption: } &\qq\neq \bs 0, \text{ i.e. }\zeta<\infty\text{ a.s.}\end{align*}
Note that $\zeta$ has a phase-type distribution (one under each $\p_i$) with phase generator $\Psi(0)$ and 
exit vector~$\qq$, see e.g.~\cite[Ch.\ III.4]{APQ}. Clearly, the life time~$\zeta$ depends on the process unless $q_i=q$ for all $i\in E$, in which case it is independent and exponentially distributed. 
On top of the implicit killing with rates $\qq$ we sometimes need additional killing with rates $\bb$; we write $\p^\bb$, $\Psi^\bb$ and so on, meaning that the underlying killing rates are $\bb+\qq$. Finally, some results of this paper (but not all) can be generalized to non-defective processes by taking $q_i\downarrow 0$ for all~$i$.

For any random time $T$ let $T_i=\int_0^T\ind{J_t=i}\D t$ be the time spent in phase~$i$ up to~$T$, and write $\bs T$ for the corresponding vector.
Note that by the properties of an exponential random variable we have
\[\e[e^{\a X_t-\langle\bb,\bs t\rangle};J_t]=\e^\bb [e^{\a X_t};J_t]=e^{\Psi^\bb(\a)t}.\]
Moreover, the joint transform of $X$ and $J$ at the killing time together with times spent in every phase is given by the following expression.
\begin{align}\label{eq:WH_easy}&\e[e^{\a X_{\zeta-}-\langle\bb,\bs \zeta\rangle};J_{\zeta-}]=\int_0^\infty\e[e^{\a X_t-\langle\bb,\bs t\rangle};J_t]\Delta_\qq\D t=\int_0^\infty e^{\Psi^\bb(\a)t}\D t\Delta_\qq\\&=-\left(\Psi^\bb(\a)\right)^{-1}\Delta_\qq,\nonumber\end{align}
where $\Delta_\qq:=(\ind{i=j}q_i)_{ij}$ denotes the diagonal matrix with $\qq$ on the diagonal. 
Here we used the fact that the set of jump points of $X$ has 0 Lebesgue measure, and that all the eigenvalues of $\Psi^\bb(\a)$ have negative real parts for $\a\in\ii\mathbb R,\bb\geq \bs 0$. It is noted that~\eqref{eq:WH_easy} should be compared to 
$\e(e^{\a X_{e_q}-\beta e_q})=-q/(\Psi(\a)-\b-q)$ in the \levy case, where $e_q$ is an independent exponential random variable with rate~$q$, see the left side of~(6.17) in~\cite{kyprianou}.

\subsection{Partition of phases}\label{sec:partition}
Recall that for a \levy process $X$ the point 0 is said to be regular for an open or closed set $B$ if $X$ hits $B$ immediately ($0-1$ event), and is irregular otherwise.
The conditions for regularity can be found in~\cite[Thm.\ 6.5]{kyprianou}.
In the following it will be convenient to partition the phases $E$ in three groups $E^\nearrow,E^\searrow,E^\sim$:
\begin{itemize}
\item $E^\nearrow$ contains $i\in E$ such that $0$ is regular for $(0,\infty)$ and irregular for $(-\infty,0]$ under $\p_i$; the prime example: $X^i$ has bounded variation and positive drift.
\item $E^\searrow$ contains $i\in E$ such that $0$ is irregular for $(0,\infty)$ and regular for $(-\infty,0]$ under $\p_i$; the prime example: $X^i$ has bounded variation and negative drift, also it can be a compound Poisson process.
\item $E^\sim$ contains $i\in E$ such that $0$ is regular for $(0,\infty)$ and for $(-\infty,0]$ under $\p_i$; the prime example: $X^i$ has unbounded variation.
\end{itemize}
It is noted that if $X^i$ has bounded variation and 0 drift then $i$ may belong to any of the three groups.

\section{Splitting and conditioning}
\subsection{Splitting at extrema}\label{sec:split_inf}
Define the overall infimum and its (last) time, and the overall supremum and its (first) time:
\begin{align*}&\underline X=\inf_{t\in[0,\zeta)}\{X_t\}, &\underline G=\sup\{t\in[0,\zeta):X_t\wedge X_{t-}=\underline X\},\\
&\overline X=\sup_{t\in[0,\zeta)}\{X_t\}, &\overline G=\inf\{t\in[0,\zeta):X_t\vee X_{t-}=\overline X\}.\end{align*}
Note that the distinction between first and last extrema is only necessary if for some $i$ the underlying \levy process $X^i$ is a compound Poisson process; in this case one may similarly consider first infimum and last supremum times.
%The following fundamental result forms the basis for the Wiener-Hopf factorization. 
\subsubsection{Splitting at the infimum}
\begin{proposition}\label{prop:splitting} The following splitting result holds true.
\begin{itemize}
\item[(i)] On the event $\{X_{\underline G}=\underline X\}$ the process $(X,J)$ splits at $\underline G$: given $J_{\underline G}$ the processes $(X_t,J_t),t\in[0,\underline G]$ and $(X_{\underline G+t}-\underline X,J_{\underline G+t}),t\geq 0$ are independent.
\item[(ii)] On the event $\{X_{\underline G}>\underline X\}$ the process $(X,J)$ splits at $\underline G-$: given $J_{\underline G-}$ the processes $(X_t,J_t),t\in[0,\underline G)$ and $(X_{\underline G+t}-\underline X,J_{\underline G+t}),t\geq 0$ are independent.
\end{itemize}
\end{proposition}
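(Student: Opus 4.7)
The plan is to combine the strong Markov property at first-passage times below the current running infimum with a limiting argument, in the spirit of the \levy case but accounting for the phase-dependence of a MAP. I would fix a starting phase $J_0 = i$; defectiveness ensures $\zeta$ and $\underline G$ are a.s.\ finite. For each $y < 0$ I introduce the stopping time $\tau^-_y := \inf\{t > 0 : X_t < y\}$, which satisfies $\tau^-_y < \underline G$ on the event $\{\uX < y\}$. The strong Markov property at $\tau^-_y$ then gives that, conditional on $J_{\tau^-_y} = k$, the shifted post process $(X_{\tau^-_y+t} - X_{\tau^-_y}, J_{\tau^-_y+t})_{t\geq 0}$ is independent of $\mathcal F_{\tau^-_y}$ and distributed as $(X,J)$ under $\p_k$.

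For case (i), the first step is to decompose $\{X_{\underline G} = \uX, \uX < 0\} = \bigcup_n \{X_{\underline G} = \uX, \uX < -1/n\}$ and establish the splitting factorization on each piece separately. On $\{\uX < -1/n\}$ the variables $\underline G - \tau^-_{-1/n}$, $X_{\underline G} - X_{\tau^-_{-1/n}}$, $J_{\underline G}$ and the shifted post-$\underline G$ process are all functionals of the shifted post-$\tau^-_{-1/n}$ process; hence they are, conditional on $J_{\tau^-_{-1/n}} = k$, independent of $\mathcal F_{\tau^-_{-1/n}}$ and have the same conditional law as the corresponding quantities under $\p_k$. A monotone class / countable union argument then yields the desired splitting at $\underline G$ conditional on $J_{\underline G}$. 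The degenerate case $\uX = 0$ forces $\underline G = 0$ and is handled trivially.

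Case (ii) is handled analogously, with the splitting now occurring at $\underline G-$. On this event $X_{\underline G-} = \uX$ and there is an upward jump at $\underline G$ produced either by the underlying \levy component of phase $J_{\underline G-}$ or by a phase switch $J_{\underline G-} \to J_{\underline G}$ with positive height $U_{J_{\underline G-}, J_{\underline G}}$. The same approximation by $\tau^-_y$ applies, $\tau^-_y < \underline G$ still holds on $\{\uX < y\}$, and the strong-Markov-plus-monotone-class argument goes through with $J_{\underline G-}$ replacing $J_{\underline G}$ throughout; the post-process naturally incorporates the upward jump at time $\underline G$.

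The hard part will be the passage from conditioning on $J_{\tau^-_{-1/n}}$ (which depends on $n$) to conditioning on $J_{\underline G}$ alone. I plan to handle this at the level of joint expectations $\e_i[F(\mathrm{pre})G(\mathrm{post})\ind{J_{\underline G} = j, X_{\underline G} = \uX}]$ for bounded measurable $F, G$: after applying the strong Markov property at $\tau^-_{-1/n}$, the factor containing $G$ must be expressed as an intrinsic MAP quantity determined by the terminal phase $j$ alone — a canonical post-infimum law indexed by the phase at the infimum. The phase-intrinsic character of this law is the technically subtle point and the natural place where the argument departs from its \levy analogue; once it is pinned down, passing $n \to \infty$ closes the factorization.
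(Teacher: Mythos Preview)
Your scheme has a structural gap. The stopping times $\tau^-_{-1/n}$ do not approximate $\underline G$: as $n\to\infty$ they decrease to the \emph{first} passage of $X$ below~$0$, which in general occurs long before the global infimum. The entire stretch of path between $\tau^-_{-1/n}$ and $\underline G$ is part of the pre-$\underline G$ process you must control, yet it lives in the post-$\tau^-_{-1/n}$ world and is therefore entangled with the post-$\underline G$ process you want to split off. No monotone-class or countable-union device can manufacture independence from a $\sigma$-field that your stopping times never reach; the $\mathcal F_{\tau^-_{-1/n}}$ simply do not increase to anything containing the pre-$\underline G$ information. Separately, the step you flag as ``the hard part'' --- that the post-infimum law under $\p_k$ given $\underline J=j$ does not depend on $k$ --- is precisely the content of the proposition, so appealing to it at that stage is circular.

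The paper avoids both problems by choosing stopping times that sit on the descending ladder set \emph{and are already in the target phase}~$j$, so that one of them equals (or converges to) $\underline G$ and the strong Markov property delivers the post law $\p_j(\cdot\mid X_t>0\text{ for all }t>0)$ with no intermediate phase~$k$ to eliminate. Concretely: for $j\in E^\nearrow$ one enumerates the discrete epochs $T_n$ at which $X$ is at its running minimum with $J_{T_n}=j$, and exactly one of them is $\underline G$; for $j\in E^\sim$ one uses the inverse local time of $(X_t-\inf_{s\le t}X_s,J_t)$ at $(0,j)$ and lets the mesh go to zero; for $j\in E^\searrow$ one uses the epochs of upward jumps of size larger than~$\epsilon$ with $J_{T_n-}=j$, splits just before the jump, and sends $\epsilon\downarrow 0$. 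The device your outline is missing is precisely this: stopping times indexed so that one of them \emph{is} the splitting time, rather than first-passage times below fixed levels, which only give a single early snapshot of the pre-infimum path.
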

In the case of a \levy process the event $\underline X=X_{\underline G}$
has probability either~1 or~0, where the first corresponds to processes of types $E^\nearrow$ and $E^\sim$ and the second to type $E^\searrow$. Hence the splitting result can be formulated for these two cases separately, see \cite[Lem.\ VI.6]{bertoin}. This issue is more complicated for MAPs.
Letting
\begin{equation}\label{eq:underlineJ}\underline J=J_{\underline G}\ind{X_{\underline G}=\underline X}+J_{\underline G-}\ind{X_{\underline G}>\underline X},\end{equation}
be the phase in which the infimum was achieved, we have the following important observation.
\begin{lemma}\label{lem:underlineJ}
The following dichotomy holds with probability one:
\begin{itemize}
\item If $\underline J=i\in E^\sim\cup E^\nearrow$ then $X_{\underline G}=\underline X$ and $J_{\underline G}=i$,
\item If $\underline J=i\in E^\searrow$ then $X_{\underline G}>\underline X$ and $J_{\underline G-}=i$.
\end{itemize}
\end{lemma}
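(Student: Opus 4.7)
The plan is to prove the dichotomy by contradiction in each direction, using Proposition~\ref{prop:splitting} to recover Markovian behaviour on either side of the non-stopping time $\underline G$, together with the regularity properties of the free Levy processes $X^i$ from Section~\ref{sec:partition}. Intuitively, for $i\in E^\searrow$ the MAP can sit at its running infimum for positive Lebesgue time in phase $i$, so the \emph{last} attainment $\underline G$ is the right endpoint of such an interval and carries $X_{\underline G}>\underline X$; for $i\in E^\sim\cup E^\nearrow$ the process moves off any level instantly, so attainment is isolated and $X_{\underline G}=\underline X$ exactly.

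First I would rule out $X_{\underline G}=\underline X$ with $J_{\underline G}=i\in E^\searrow$. If this event had positive probability, Proposition~\ref{prop:splitting}(i) would furnish a conditional law for the post-$\underline G$ process starting at $(0,i)$ under which $X_t>0$ for every $t>0$ (else $\underline G$ would not be the \emph{last} infimum time). With positive conditional probability $J$ remains in $i$ on some initial interval, so on that segment the law is absolutely continuous with respect to the free Levy law $\p_i$ of $X^i$. But $i\in E^\searrow$ forces $X^i$ into $(-\infty,0]$ immediately almost surely: strictly below $0$ in the non-compound-Poisson case (contradicting $X_t>0$) or remaining at $0$ in the compound-Poisson case (producing infimum times after $\underline G$ and contradicting its maximality).

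Second I would rule out $X_{\underline G}>\underline X=X_{\underline G-}$ with $J_{\underline G-}=i\in E^\sim\cup E^\nearrow$. On this event $\underline G$ is the endpoint of a phase-$i$ sojourn of length $T$, where $T$ is exponentially distributed and independent of the Levy block $X^i$ driving the sojourn by the MAP structure. The event forces $X^i_{T-}=\inf_{s\in[0,T]}X^i_s$, i.e.\ the Levy block attains its own infimum at its terminal left limit. Regularity of $0$ for $(0,\infty)$ under $\p_i$ -- precisely the condition $i\in E^\sim\cup E^\nearrow$ -- is equivalent to $\p_i(X^i_{e_q}=\inf_{s\le e_q}X^i_s)=0$ at any independent exponential $e_q$, and combined with $X^i_{T-}=X^i_T$ a.s.\ (since $T$ has a continuous distribution) this yields $\p(X^i_{T-}=\inf_{s\in[0,T]}X^i_s)=0$ and the contradiction after summing over phase-$i$ sojourns.

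The principal obstacle is the non-stopping nature of $\underline G$: Proposition~\ref{prop:splitting} is what identifies the pre- and post-$\underline G$ conditional laws, after which everything reduces to Wiener-Hopf-style regularity for the in-phase Levy processes. A secondary care is to preserve the independence of the sojourn length $T$ from its driving block $X^i$ while conditioning on the global event that $\underline G$ falls in the given sojourn, which is where one uses that conditioning on a $\sigma$-null event outside the sojourn does not corrupt the in-sojourn joint law.
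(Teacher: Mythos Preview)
Your second part has a genuine gap. You assert that on $\{X_{\underline G}>\underline X=X_{\underline G-},\ J_{\underline G-}=i\}$ the time $\underline G$ is the endpoint of a phase-$i$ sojourn of (independent exponential) length~$T$, and then reduce to $\p_i(X^i_{T-}=\inf_{s\le T}X^i_s)=0$. But the jump at $\underline G$ need not be a phase switch: it may be a jump of the L\'evy block $X^i$ itself, so that $J_{\underline G-}=J_{\underline G}=i$ and $\underline G$ lies strictly inside the sojourn. In that case the sojourn's infimum is attained at the interior time $\underline G-T_n$ (as a left limit), not at the endpoint, and your argument does not apply. Ruling out this interior case requires exactly the classical L\'evy fact that for $i\in E^\sim\cup E^\nearrow$ the last infimum of $X^i$ over an exponential horizon is achieved with $X^i_{\underline G}=\underline X^i$ (no jump away from it), which is the content the paper imports from Millar via Remark~\ref{rem:switch}.

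In the first part, invoking Proposition~\ref{prop:splitting}(i) does not deliver the ``absolute continuity with respect to $\p_i$'' you need: that proposition gives only conditional independence, not the post-$\underline G$ law, and its proof treats $\underline J\in E^\searrow$ solely on $\{X_{\underline G}>\underline X\}$ by splitting at $\underline G-$, so it is silent on the hypothetical event $\{X_{\underline G}=\underline X,\ J_{\underline G}=i\in E^\searrow\}$ you wish to exclude. The paper's argument is far more direct and avoids all of this: one case-splits on whether $J$ switches at $\underline G$. If there is no switch, the dichotomy reduces to the corresponding L\'evy statement for $X^i$ within the sojourn (an exponential interval). If there is a switch, one simply checks the three scenarios of Remark~\ref{rem:switch} and reads off that $\underline J$ lands in $E^\searrow$ when $X_{\underline G}>\underline X$ and in $E^\sim\cup E^\nearrow$ when $X_{\underline G}=\underline X$. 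No splitting at the non-stopping time $\underline G$ is required.
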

\begin{proof}
Consider separately the cases when the phase is switched and not switched at~$\underline G$, see also Remark~\ref{rem:switch}. Note also that $J_{\underline G}\in E^\sim\cup E^\nearrow$ does not imply that $X_{\underline G}=\underline X$ according to the first scenario in Remark~\ref{rem:switch}.
\end{proof}
It is clear from Lemma~\ref{lem:underlineJ} and Proposition~\ref{prop:splitting} that $\{\underline J=i\}$ determines the law of the post-infimum process, and there is no need to parameterize it by the phase $J_0$; we denote this law by~$\p^\uparrow_i$.
\begin{definition}
Let $(X,J)$ under $\p_i^\uparrow$ be distributed as $(X_{\underline G+t}-\underline X,J_{\underline G+t}),t\geq 0$ under $\p$ given $\{\underline J=i\}$.
\end{definition}
If for some $i$ and all $j$ we have $\p_j(\underline J=i)=0$ then $\p^\uparrow_i$ is left undefined.
Now Proposition~\ref{prop:splitting} states that given $\{\underline J=i\}$ the process splits at $\underline G$ or at $\underline G-$ according to $i\in E^\sim\cup E^\nearrow$ and $i\in E^\searrow$, and the post-infimum process has the law $\p^\uparrow_i$.
%Now Proposition~\ref{prop:splitting} can be written in terms of functionals $D$ and $F$ as follows:
%\begin{align*}
%&\e_j(D_{\underline G+}\sF_{\underline G};\underline J=i)=\e_j(D_{\underline G+};\underline J=i)\e_i^\uparrow(F),\qquad i\in E^\sim\cup E^\nearrow,\\
%&\e_j(D_{\underline G}\sF_{\underline G-};\underline J=i)=\e_j(D_{\underline G};\underline J=i)\e_i^\uparrow(F),\qquad i\in E^\searrow.
%\end{align*}
%problem cadlag property lost

\begin{remark}\label{rem:switch}
It is important to understand the cases when $J$ may switch at $\underline G$, say from phase $i$ to phase~$j$.
For this to occur $X^i$ must be able to achieve its infimum at an exponential time or $X^j$ must be able to achieve its infimum at 0.
According to~\cite[Prop.\ 2.1]{millar_zero_one} there are three (non-exclusive) cases:
\begin{itemize}
\item $i\in E^\searrow$ and $\Psi_{ij}(0)\p(U_{ij}>0)>0$ for some $j\neq i$;
\item $j\in E^\nearrow$ and $\Psi_{ij}(0)\p(U_{ij}<0)>0$ for some $i\neq j$; 
\item $i\in E^\searrow, j\in E^\nearrow$ and $\Psi_{ij}(0)\p(U_{ij}=0)>0$ for some $j$,
\end{itemize}
see Figure~\ref{fig:scenarios}. Note that in the last scenario one may alternatively  split at $\underline G-$; consider the stopping times of phase switches. Finally, the time of killing $\zeta$ can be interpreted as the time of phase switch from some $i$ to $\dagger$, and if $\zeta=\underline G$ then necessarily $i\in E^\searrow$.%; similarly if $\underline G=0$ then necessarily $J_0\in E^\nearrow$.
%It is only in case (1) that we need to condition on $J(\underline G-)$, because the jump at $\underline G$ is included in the post-infimum process and it does depend on $i$. Furthermore, if there is no phase switch at $\underline G$ then 
%In cases (2) and (3) $\underline G$ can be approximated by an increasing sequence of stopping times at which decomposition holds true, and so we must look at the post-infimum process from $(\underline G-,\underline X)$.
\begin{figure}[h!]
\centering
\includegraphics[width=.8\linewidth]{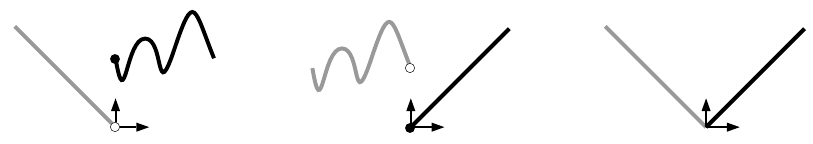}
\caption{Scenarios of phase switch at the infimum}
\label{fig:scenarios}
\end{figure}
\end{remark}

%Note that the case of phase switching at $\underline G$ can be easily dealt with by applying classical splitting at the epochs of phase switches, which constitute a sequence of stopping times. Note that 

\begin{proof}[Proof of Proposition~\ref{prop:splitting}]
Splitting at the infimum for MAPs can be proven along the lines of~\cite{millar_zero_one} or~\cite{pitman_fluct}, and so we keep the proof rather brief.

For $\underline J=j\in E^\nearrow$ consider the epochs $T_n$ when $X$ is at the running minimum and $J$ is in~$j$. These epochs are stopping times, and hence we may apply classical splitting at each~$T_n$. Thus for any bounded functionals $f,g$ we have
 \begin{align*}&\e_i(f\{(X_t,J_t),t\in[0,\underline G]\}g\{(X_{\underline G+t}-\underline X,J_{\underline G+t}),t\geq 0\};\underline J=j)\\
&=\sum_n\e_i(f\{(X_t,J_t),t\in[0,T_n]\},T_n<\infty)\\
&\qquad\times\e_i(g\{(X_{T_n+t}-X_{T_n},J_{T_n+t}),t\geq 0\};X_{T_n+t}-X_{T_n}>0,t>0)\\
&=\sum_n\e_i(f\{(X_t,J_t),t\in[0,T_n]\};T_n<\infty,X_{T_n+t}-X_{T_n}>0,t>0)\\
&\qquad\times\e_j(g\{(X_t,J_t),t\geq 0\}|X_t>0,t>0)\\
&=\e_i(f\{(X_t,J_t),t\in[0,\underline G]\};\underline J=j)\e_j(g\{(X_t,J_t),t\geq 0\}|X_t>0,t>0),
\end{align*}
which establishes splitting at $\underline G$.

Next, consider the case $\underline J=j\in E^\sim$ which implies that there is no phase switch at~$\underline G$, and that there is no jump of $X$ at $\underline G$.
In this case we approximate $\underline G$ by an array of stopping times.
The standard way is to use the inverse local time at the infimum, see e.g.\ \cite[Lem.\ VI.6]{bertoin} for the \levy case and~\cite{klusik_palm,kyprianou_appendix} discussing the same procedure for MAPs. More precisely, let $L_t$ be a continuous local time process of $(X_t-\inf_{s\leq t}X_s,J_t)$ at $(0,j)$. On the event $n/m\leq L_\infty<(n+1)/m$ we apply classical splitting at the stopping time $L^{-1}_{n/m}$ similarly to the previous paragraph. Summing up over $n$ and taking $m\rightarrow\infty$ yields splitting at $\underline G$.

For $\underline J=j\in E^\searrow$ one may use the same argument as in the previous paragraph. Nevertheless, we present an alternative approach providing a better insight into the post-infimum process.
Consider the epochs of jumps of $X$ larger than $\epsilon>0$ which constitute a sequence of stopping times. 
Note that a given epoch $T_n$ coincides with $\underline G$ if $\inf_{t\geq 0}(X_{T_n+t}-X_{T_n-})>0$ and $\inf_{t<T_n} X_t=X_{T_n-}$. But for each of these epochs we can apply classical splitting right-before the jump, since the jump is also independent of the past given the corresponding phase. These ideas result in the following identity
\begin{align*}&\e_i(f\{(X_t,J_t),t\in[0,\underline G)\}g\{(X_{\underline G+t}-\underline X,J_{\underline G+t}),t\geq 0\};X_{\underline G}-X_{\underline G-}>\epsilon,J_{\underline G-}=j)\\
&=\e_i(f\{(X_t,J_t),t\in[0,\underline G)\};X_{\underline G}-X_{\underline G-}>\epsilon,J_{\underline G-}=j)\\
&\qquad\times\e_j(g\{(X^{(\epsilon)}_t,J^{(\epsilon)}_t),t\geq 0\}|\underline X^{(\epsilon)}>0),
\end{align*}
where $(X^{(\epsilon)},J^{(\epsilon)})$ has the law of the original process started from an independent $(X^{(\epsilon)}_0,J^{(\epsilon)}_0)$ and this latter pair under $\p_j$ denotes the first jump larger than $\epsilon$ while in phase~$j$, and the phase right-after this jump; see Section~\ref{sec:initial_distr} for further comments about the distribution of $(X^{(\epsilon)}_0,J^{(\epsilon)}_0)$. Finally, let $\epsilon\downarrow 0$ to deduce splitting at $\underline G-$.
\end{proof}
%\begin{remark}\label{rem:splitting_conditioning}
%It is important to notice that the above proof has certain implications for the post-infimum process. Firstly, it shows that $\p^\uparrow_i,i\in E^\nearrow$
%is the law of the original process conditioned to stay positive.
%In this case we condition on the event of positive probability.
%
%Moreover, $\p^\uparrow_i, i\in E^\searrow$ is the law of the original process started at some independent level $X^0$ and phase $J^0$ and conditioned to stay positive; these $X^0$ and $J^0$ must have the distribution of an `average' jump.

%\end{remark}

\subsubsection{Splitting at the supremum}
The result about splitting at the supremum can be obtained by taking $(-X,J)$ process, and adapting the statements and the proofs slightly, where it concerns a compound Poisson process $X^j$. Note also that when discussing splitting at supremum we assume that the absorbing state is $(-\infty,\dagger)$.
Define
\begin{equation}\label{eq:overlineJ}\overline J:=\ind{X_{\overline G-}=\overline X}J_{\overline G-}+\ind{X_{\overline G-}<\overline X}J_{\overline G},\end{equation}
where we take $\overline G-$ in the indicators as compared to $\underline G$ in the definition~\eqref{eq:underlineJ} of $\underline J$; by convention $0-:=0$. This choice turns out to be more convenient when it comes to time reversal at the infimum, see Section~\ref{sec:rev_inf}. In fact, the only difference appears when there is a phase switch at $\overline G$ but no jump of $X$, see the third scenario of Remark~\ref{rem:switch}: now we take the phase right before the switch and not right after.
  
Next, we note that on the set of probability one it holds that
\begin{itemize}
\item if $\overline J=i\in E^\sim\cup E^\nearrow$ then $X_{\overline G-}=\overline X$ and $J_{\overline G-}=i$,
\item if $\overline J=i\in E^\searrow$ then $X_{\overline G-}<\overline X$ and $J_{\overline G}=i$.
\end{itemize}
Finally, given $\{\overline J=i\}$ the process splits at $\overline G-$ or at $\overline G$ according to $i\in E^\sim\cup E^\nearrow$ and $i\in E^\searrow$, and the law of the post-supremum process is denoted by $\p^\downarrow_i$.

\subsection{Process conditioned to stay positive}\label{sec:positive}
For $x>0$ define $\p^{\uparrow}_{x,i}$ as the law of $(X,J)$ conditioned on $X_0=x,J_0=i$ and $X_t>0$ for all $t>0$. Clearly, $(X,J)$ under this law is a time-homogeneous Markov process with the semigroup
\begin{align}\label{eq:semigroup}\p^{\uparrow}_{x,i}(X_t\in\D y,J_t=j)&=\p_{x,i}(X_t\in\D y,J_t=j|\tau_0^-=\infty)\\&=\p_{x,i}(X_t\in\D y,J_t=j,t<\tau_0^-)\frac{\p_{y,j}(\tau_0^-=\infty)}{\p_{x,i}(\tau_0^-=\infty)}\nonumber\end{align}
for $x,y>0$. 
Recall that we only consider killed processes, and hence $\p_{x,i}(\tau_0^-=\infty)>0$.
%It is noted that the conditioned process is a special case of so-called Doob's $h$-transform.

\begin{remark}
There is a large body of literature devoted to \levy processes conditioned to stay positive, where the original process drifts to $-\infty$. This case is more complex than ours and may lead to different laws depending on the way conditioning (on the event of probability~0) is implemented, see~\cite{hirano}.
\end{remark}

The following result is a straightforward adaptation of~\cite[Prop.\ 4.1]{millar_zero_one}. It explains the notation chosen for the law of the post-infimum process. The corresponding result for \levy processes can also be found in~\cite{bertoin_splitting,chaumont_doney}.
\begin{proposition}
The post-infimum process, i.e.\ $(X,J)$ under $\p^\uparrow_i$, is a Markov process with transition semigroup defined in~\eqref{eq:semigroup} for $x>0$.
\end{proposition}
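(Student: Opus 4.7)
The plan is to identify the post-infimum process under $\p_i^\uparrow$ with a Doob $h$-transform of the original process with respect to the excessive function $h(x,j) := \p_{x,j}(\tau_0^-=\infty)$, whereupon the Markov property and the explicit semigroup~\eqref{eq:semigroup} fall out of the standard theory of conditioning a strong Markov process on a terminal event of positive probability.

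First I would record the key observation following from the definition of $\underline G$ as the \emph{last} time the overall infimum is attained: for every $t > \underline G$ one has $X_t \wedge X_{t-} > \underline X$. Translating via $Y_t := X_{\underline G + t} - \underline X$, this says $Y_t > 0$ for all $t > 0$, so under $\p_i^\uparrow$ the event $\{\tau_0^- = \infty\}$ holds almost surely. In particular, the semigroup only needs to be checked for starting points $x > 0$, where the conditioning in~\eqref{eq:semigroup} is non-degenerate.

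Second, to verify the Markov property, fix $s > 0$ and bounded measurable functionals $f, g$ and aim to show
\[\e_i^\uparrow\!\bigl[f(X_u,J_u;u\in[0,s])\,g(X_{s+\cdot},J_{s+\cdot})\bigr]=\e_i^\uparrow\!\bigl[f(X_u,J_u;u\in[0,s])\,\phi(X_s,J_s)\bigr],\]
with $\phi(x,j):=\e^\uparrow_{x,j}[g]$. Disintegrating over $(X_s,J_s)=(x,j)$ (where $x>0$ by Step~1) and using Proposition~\ref{prop:splitting} to translate the conditioning $\underline J=i$ back to the original law $\p$, the problem reduces to identifying the conditional law of $(X_{s+\cdot},J_{s+\cdot})$ given $(X_s,J_s)=(x,j)$ under $\p_i^\uparrow$. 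By the strong Markov property of $(X,J)$ applied at a stopping-time approximation of $\underline G + s$ together with splitting, this future has the law of $(X,J)$ under $\p_{x,j}$ further conditioned to stay above~$0$, i.e.\ $\p^\uparrow_{x,j}$. The equivalence of the two expressions in~\eqref{eq:semigroup} is then Bayes' rule applied to the pre-$\tau_0^-$ semigroup, with the multiplicative reweighting by $h(x,j)/h(0,i)$ (or by $h(x,j)/h(Y_0,i)$ in the $E^\searrow$ case) cancelling appropriately through the Markov step.

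The main obstacle is the rigorous use of the strong Markov property at $\underline G + s$, which is \emph{not} a stopping time of the natural filtration of $(X,J)$. I would circumvent this by reusing the stopping-time approximations of $\underline G$ already constructed in the proof of Proposition~\ref{prop:splitting}: the epochs $T_n$ when $X$ revisits its running minimum in phase $j$ (for $j\in E^\nearrow$), the inverse local times $L^{-1}_{n/m}$ (for $j\in E^\sim$), and the $\epsilon$-jump epochs (for $j\in E^\searrow$). At each approximating stopping time the classical strong Markov property of $(X,J)$ applies directly, and the desired identity is obtained by summing and passing to the limit exactly as in the derivation of splitting. Equivalently, one may simply observe that the argument of~\cite[Prop.~4.1]{millar_zero_one} uses only splitting at $\underline G$ and the strong Markov property of the underlying process, and therefore transports verbatim to the MAP setting with $J$ carried along as an extra coordinate.
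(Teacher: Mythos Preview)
Your proposal is correct and takes essentially the same approach as the paper. The paper gives no proof at all beyond the sentence ``a straightforward adaptation of~\cite[Prop.~4.1]{millar_zero_one}''; your final paragraph makes exactly this reduction, and the preceding paragraphs merely spell out what that adaptation consists of (the $h$-transform with $h(x,j)=\p_{x,j}(\tau_0^-=\infty)$ and the stopping-time approximations of $\underline G$ already used in Proposition~\ref{prop:splitting}).
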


It is noted that if $i\in E^\nearrow\cup E^\sim$ then the post-infimum process starts at 0 and leaves 0 immediately, and hence the semigroup $\p_{x,j}^\uparrow,x>0,j\in E$ determines the law $\p^\uparrow_i$. Moreover, it can be shown that $\p^\uparrow_{x,i}$ converges on the Skorokhod space of paths to $\p^\uparrow_i$ as $x\downarrow 0$, see~\cite[Thm.\ 2]{chaumont_doney} for a similar statement concerning \levy processes. This result follows from splitting at the infimum and the fact that $\p_{x,i}(\underline G>\epsilon, \underline J\neq i|\underline X>0)\rightarrow 0$ which is obtained from the corresponding theory for \levy processes. Note also that for $i\in E^\nearrow$ the semigroup~\eqref{eq:semigroup} can be extended to include $x\geq 0$, because in this case $\p_{0,i}(\tau_0^-=\infty)>0.$ Then 
\[\p_i^\uparrow(\cdot)=\p_i(\cdot|\tau_0^-=\infty)=\p_i(\cdot|\underline X\geq 0),\]
which also follows from the proof of Proposition~\ref{prop:splitting}.

Finally, if $i\in E^\searrow$ then the corresponding result is less neat.
Compared to the \levy case there is an additional problem that 
$\p_{x,i}(\underline J\neq i|\underline X>0)$ does not necessarily converge to~$0$ (think of a model with $\Psi_{ij}(0)\p(U_{ij}=0)>0$).
Instead we use an alternative approach described in the following Section.

\subsubsection{On the initial distribution}\label{sec:initial_distr}
Let us focus on the law $\p_i^\uparrow$ for $i\in E^\searrow$, which corresponds to the post-infimum process starting from a positive level at time~0. 
The proof of Proposition~\ref{prop:split_sigma} implies that 
\[\e^\uparrow_ig\{(X_t,J_t),t\geq 0\}=
\lim_{\epsilon\downarrow 0}\e_i(g\{(X^{(\epsilon)}_t,J^{(\epsilon)}_t),t\geq 0\}|\underline X^{(\epsilon)}>0),\]
where $(X^{(\epsilon)},J^{(\epsilon)})$ has the law of the original process started from an independent $(X^{(\epsilon)}_0,J^{(\epsilon)}_0)$.
Notice that $\p_i(X^{(\epsilon)}_0\in\D x,J^{(\epsilon)}_0=j)$ is determined by competing independent exponential clocks: 
\begin{itemize}
\item for $j=i$ the rate is $\nu_i(\epsilon,\infty)$ and the level law is $\nu_i(\D x)/\nu_i(\epsilon,\infty)$,
\item for $j\in E\backslash\{i\}$ the rate is $\Psi(0)_{ij}\p(U_{ij}>\epsilon)$ and the level law is $\p(U_{ij}\in \D x)/\p(U_{ij}>\epsilon)$,
\item for $j=\dagger$ the rate is $q_i$ and the level equals $\infty$.
\end{itemize}
It is convenient to define
\begin{equation}\label{eq:Udef}U_{ij}(\D x):=\ind{i=j}\nu_i(\D x)+\ind{i\neq j}\Psi_{ij}(0)\p(U_{ij}\in \D x),\end{equation}
which we call the \emph{jump measure} associated to a MAP.
Now the above observations lead to the following result.

\begin{proposition}\label{prop:init_law}
For $i\in E^\searrow, q_i,x> 0$ it holds that
\begin{align*}\p_i^\uparrow( X_0\in \D x;J_0=j)&=\frac{1}{c_i+q_i}
v_j(x)U_{ij}(\D x),\\
\p_i^\uparrow(J_0=\dagger)&=
\frac{q_i}{c_i+q_i},
\end{align*}
where 
\begin{align*}&v_j(x):=\p_j(\underline X>-x),
 &c_i:=\int_0^\infty U_{i\cdot}(\D y)\bs v(y)\in[0,\infty).\end{align*}
\end{proposition}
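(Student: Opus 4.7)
The plan is to base everything on the limit characterization of $\p_i^\uparrow$ for $i\in E^\searrow$ displayed just before the statement, namely
\[\e_i^\uparrow g\{(X_t,J_t),t\geq 0\}=\lim_{\epsilon\downarrow 0}\e_i\bigl(g\{(X^{(\epsilon)}_t,J^{(\epsilon)}_t),t\geq 0\}\bigm|\underline X^{(\epsilon)}>0\bigr),\]
applied with $g$ a bounded measurable function of the initial pair $(X_0,J_0)$ alone. The whole problem then reduces to computing the joint conditional law of $(X^{(\epsilon)}_0,J^{(\epsilon)}_0)$ given $\{\underline X^{(\epsilon)}>0\}$ and passing $\epsilon\downarrow 0$.

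The three bullet points preceding the statement combine, via~\eqref{eq:Udef}, into the compact description
\[\p_i(X^{(\epsilon)}_0\in\D x,J^{(\epsilon)}_0=j)=\ind{x>\epsilon}\frac{U_{ij}(\D x)}{R_i(\epsilon)},\quad j\in E,\qquad \p_i(J^{(\epsilon)}_0=\dagger)=\frac{q_i}{R_i(\epsilon)},\]
where $R_i(\epsilon):=\nu_i(\epsilon,\infty)+\sum_{k\neq i}\Psi_{ik}(0)\p(U_{ik}>\epsilon)+q_i$ is the total competing-clocks rate. By the strong Markov property applied at time~$0$, on $\{(X^{(\epsilon)}_0,J^{(\epsilon)}_0)=(x,j)\}$ with $j\in E$ the process $(X^{(\epsilon)},J^{(\epsilon)})$ thereafter has the law of $(X+x,J)$ under $\p_j$, so
\[\p_i\bigl(\underline X^{(\epsilon)}>0\bigm|X^{(\epsilon)}_0=x,J^{(\epsilon)}_0=j\bigr)=\p_j(\underline X>-x)=v_j(x),\]
while on $\{J^{(\epsilon)}_0=\dagger\}$ the initial level is $+\infty$ and so $\{\underline X^{(\epsilon)}>0\}$ holds trivially. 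Multiplication then yields
\begin{align*}
\p_i(X^{(\epsilon)}_0\in\D x,J^{(\epsilon)}_0=j,\underline X^{(\epsilon)}>0)&=\ind{x>\epsilon}\frac{v_j(x)U_{ij}(\D x)}{R_i(\epsilon)},\\
\p_i(J^{(\epsilon)}_0=\dagger,\underline X^{(\epsilon)}>0)&=\frac{q_i}{R_i(\epsilon)}.
\end{align*}

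Forming the conditional probability cancels $R_i(\epsilon)$ and leaves the common denominator $D(\epsilon):=\sum_k\int_\epsilon^\infty v_k(y)U_{ik}(\D y)+q_i$. Monotone convergence as $\epsilon\downarrow 0$ sends $D(\epsilon)\uparrow c_i+q_i\in(0,\infty]$ and each numerator to its obvious limit, giving the claimed formulas once $c_i<\infty$ is established. For this last point, recall that $\p_i^\uparrow$ is a probability law under which $X_0>0$ a.s.\ on $\{J_0\in E\}$ (the initial level of the post-infimum process is a single upward jump of the ambient process). If $c_i$ were infinite, then for every fixed $\delta>0$ the numerator $\sum_j\int_\delta^\infty v_j(x)U_{ij}(\D x)$ is finite while $D(\epsilon)\to\infty$, so $\p_i^\uparrow(X_0\geq\delta,J_0\in E)=0$, hence $X_0=0$ a.s.\ under $\p_i^\uparrow$, contradicting $X_0>0$. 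Therefore $c_i<\infty$ and the two formulas follow.

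The main obstacle is the \emph{a posteriori} justification of $c_i<\infty$: the approximating total rate $R_i(\epsilon)$ may well blow up when $\nu_i$ has infinite mass, and finiteness of the limiting denominator $c_i+q_i$ must be extracted from the probabilistic fact that $\p_i^\uparrow$ assigns a strictly positive starting position on $\{J_0\in E\}$, rather than from any isolated bound on the measure $U_{i\cdot}\bs v$. The remaining ingredients (unification via~\eqref{eq:Udef}, the Markov step at time~$0$, and monotone convergence) are routine once the killing alternative $\{J^{(\epsilon)}_0=\dagger\}$ is correctly treated on equal footing with the genuine jump alternatives.
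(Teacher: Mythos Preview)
Your proof is correct and follows essentially the same approach as the paper: both start from the limit characterization via $(X^{(\epsilon)},J^{(\epsilon)})$, use the competing-clocks description to get $\p_i(X^{(\epsilon)}_0\in\D x,J^{(\epsilon)}_0=j)=U_{ij}(\D x)/r_i^{(\epsilon)}$ on $\{x>\epsilon\}$, condition on $\{\underline X^{(\epsilon)}>0\}$ via $v_j(x)$, cancel the total rate, and pass to the limit. The only difference is cosmetic, in the finiteness argument for $c_i$: the paper argues that $c_i=\infty$ forces $\p_i^\uparrow(J_0=\dagger)=1$, which probabilistically means no upward jumps are possible in phase~$i$ and hence $c_i=0$; you instead derive $\p_i^\uparrow(X_0=0)=1$ and contradict the fact that the post-infimum process for $i\in E^\searrow$ starts from a strictly positive level on $\{J_0\in E\}$. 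Both contradictions are valid and rest on the same structural input (Lemma~\ref{lem:underlineJ}).
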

\begin{proof}
Observe that
\begin{align*}
&\p_i^\uparrow(X_0> x;J_0=j)=\lim_{\epsilon\downarrow 0}\p_i(X^{(\ep)}_0> x;\underline X^{(\ep)}>0,J^{(\ep)}_0=j)/\p_i(\underline X^{(\ep)}>0)\\
&=\lim_{\epsilon\downarrow 0}\p_i( X^{(\ep)}_0> x;v_j(X^{(\ep)}_0),J^{(\ep)}_0=j)/(\sum_j\p_i(v_j(X^{(\ep)}_0);J_0^{(\ep)}=j)+\p_i(J_0^{(\ep)}=\dagger)),
\end{align*}
where $\p_i(X^{(\ep)}_0\in \D y, J^{(\ep)}_0=j)=U_{ij}(\D y)/r_i^{(\ep)}$ and $\p_i(J^{(\ep)}_0=\dagger)=q_i/r_i^{(\ep)}$ for $y>\ep$, and
$r_i^{(\ep)}$ is the total rate of the corresponding jumps.
Hence we have
\begin{equation}\label{eq:jump}\p_i^\uparrow( X_0> x;J_0=j)=\int_x^\infty v_j(y)U_{ij}(\D y)/\left(\int_0^\infty U_{i\cdot}(\D y)\bs v(y)+q_i\right),\end{equation}
where the first integral is clearly finite.
If $c_i=\infty$ then it must be that $\p_i^\uparrow(J_0=\dagger)=1$,
but the probabilistic reasoning shows that this is only possible when $U_{i\cdot}(0,\infty)\bs 1=0$, i.e.\ no jumps up in phase~$i$ are possible, showing that in fact $c_i=0$.
\end{proof}
Let us mention that we elaborate on this results a bit further in Section~\ref{sec:initial_sn} in the case of a spectrally positive processes.

\subsubsection{Process conditioned to stay non-positive}\label{sec:non_pos}
Similarly, the law $\p_i^\downarrow$ can be seen as the law of the process conditioned to stay non-positive. There are two features of $\p_i^\downarrow$ which are different from those of $\p_i^\uparrow$. Firstly, under $\p_i^\downarrow$ the process starts from the level~0 if $i\in E^\sim\cup E^\searrow$, but it may still do so for $i\in E^\nearrow$, see~\eqref{eq:overlineJ} and think about the exceptional scenario.
Secondly, if $X^i$ is a compound Poisson process then under $\p_i^\downarrow$ the process $X$ stays at the level~0 before jumping down.

\subsection{Splitting at the last exit from an interval}\label{sec:split_last}
Define the last exit time from~$(-\infty,a]$ by
\[\sigma_a:=\sup\{t\geq 0:X_t\leq a\}\]
and note that there are two trivial cases: $\sigma_a=0$ if $X_t>a$ for all $t\in[0,\zeta-)$, and $\sigma_a=\zeta$ if $X_{\zeta-}\leq a$.

The following result is well-known in the theory of Markov processes, see~\cite{millar_survey} and references therein.
\begin{proposition}\label{prop:split_sigma} 
Conditionally on $(X_{\sigma_a},J_{\sigma_a})$ the process 
$(X_{\sigma_a+t}-a,J_{\sigma_a+t}),t\geq 0$ is a Markov process with semigroup $\p_{x,j}^\uparrow,x>0,j\in E$ which is independent of $(X_t,J_t),t\in[0,\sigma_a).$
\end{proposition}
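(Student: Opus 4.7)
The result is a classical last-exit decomposition in the style of Meyer and Getoor--Sharpe, cited here as~\cite{millar_survey}, and the plan is to adapt the standard argument to the MAP setting while leaning on the characterisation of $\p^\uparrow$ from Section~\ref{sec:positive}. The natural route is via the strong Markov property applied at a family of stopping times that exhaust $\sigma_a$ from below, combined with an approximation scheme analogous to the one used in the proof of Proposition~\ref{prop:splitting}.

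First, for any stopping time $T$ of $(X,J)$, any bounded $\mathcal{F}_T$-measurable $F$, and any bounded measurable $g$ on Skorokhod space, the strong Markov property at $T$ gives
\[\e_i[F\ind{X_T\leq a,\,T<\zeta}\ind{X_{T+s}>a\ \forall s>0}\,g((X_{T+\cdot}-a,J_{T+\cdot}))]=\e_i[F\ind{X_T\leq a,\,T<\zeta}H_a(X_T,J_T)],\]
where $H_a(y,j):=\e_j(g((X_\cdot+y-a,J_\cdot));\,X_s>a-y\ \forall s>0)$. The key point is that on $\{X_T\leq a,\,T<\zeta\}$ the event $\{\sigma_a=T\}$ coincides with $\{X_{T+s}>a\ \forall s>0\}$, so this identity captures the conditional post-$\sigma_a$ behaviour on the slice $\{\sigma_a=T\}$. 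Next I would exhaust $\sigma_a$ by a countable family of such stopping times, using the same devices as in Proposition~\ref{prop:splitting}: for continuous exits ($X_{\sigma_a}=a$ with $J_{\sigma_a}\in E^\sim\cup E^\nearrow$) via the inverse local time of $(X,J)$ at $(a,j)$, and for jump or phase-switch exits ($J_{\sigma_a}\in E^\searrow$, or a jump across $a$) via the stopping times of jumps of size exceeding $\epsilon>0$, letting $\epsilon\downarrow 0$. Summing the identity above over this family collapses the post-$\sigma_a$ expectation into $\e_i[F((X,J)|_{[0,\sigma_a]})\,H_a(X_{\sigma_a},J_{\sigma_a})]$.

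Finally, I would identify $H_a(y,j)$ with the expectation of $g$ under the Markov process of semigroup $\p^\uparrow_{x,j'}$, $x>0$, started from the (possibly singular) initial law corresponding to $(y-a,j)$. When $y=a$ this is immediate from the definition of $\p^\uparrow_j$ and the formula~\eqref{eq:semigroup}; when $y<a$ the shifted process starts at $y-a<0$ but enters $(0,\infty)$ instantaneously, after which its finite-dimensional distributions at times $0<t_1<\cdots<t_n$ are governed by~\eqref{eq:semigroup}, exactly as in the discussion of $\p^\uparrow_i$ for $i\in E^\searrow$ in Section~\ref{sec:initial_distr}. The main obstacle is the uniform treatment of the various phase-switch and jump scenarios at $\sigma_a$ (the analogue of Remark~\ref{rem:switch}), which is precisely the technical issue already resolved in the proof of Proposition~\ref{prop:splitting}; I would invoke those constructions rather than redo them.
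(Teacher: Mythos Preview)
Your overall strategy---exhausting $\sigma_a$ by stopping times and applying the strong Markov property at each, exactly as in the proof of Proposition~\ref{prop:splitting}---is what the paper does (its own proof is a one-line reference to that argument). However, your final paragraph contains a genuine directional error. On the event $\{\sigma_a<\zeta\}$ one always has $X_{\sigma_a}\geq a$, never $X_{\sigma_a}<a$: if $X_{\sigma_a}<a$ then right-continuity forces $X_t<a$ on a right neighbourhood of $\sigma_a$, contradicting the definition of $\sigma_a$ as the last time in $(-\infty,a]$. The two non-trivial cases are therefore $X_{\sigma_a}=a$ (continuous exit) and $X_{\sigma_a}>a$ (exit by a jump across the level), and in both the shifted post-process $(X_{\sigma_a+\cdot}-a,J_{\sigma_a+\cdot})$ starts in $[0,\infty)$, not below zero.

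Consequently your displayed identity with the indicator $\ind{X_T\leq a}$ captures only the continuous-exit slice: combined with $\{X_{T+s}>a\ \forall s>0\}$ it forces $X_T=a$ by right-continuity. For jump exits you need the stopping times $T$ with $X_{T-}\leq a<X_T$ (the $\epsilon$-jump times you mention), and the strong Markov property at $T$ then yields the post-process started from $(X_T-a,J_T)$ with $X_T-a>0$; the semigroup identification with $\p^\uparrow_{x,j}$ from~\eqref{eq:semigroup} is immediate since you are already at a strictly positive level. There is no scenario in which the post-$\sigma_a$ process ``starts at $y-a<0$ and enters $(0,\infty)$ instantaneously''---this mis-reads the analogy with Section~\ref{sec:initial_distr}, where the post-infimum process for $i\in E^\searrow$ likewise starts at a \emph{positive} level (the jump up from the infimum), not a negative one.
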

This result can be proven in a way similar to the proof of Proposition~\ref{prop:splitting}, that is, we make use of classical splitting at certain stopping times.

The main problem in relating the laws of the post-infimum and the post-$\sigma_a$ processes lies in the initial distribution of the latter. This problem is avoided if $X_{\sigma_a}=a$, i.e.\ the post-$\sigma_a$ process starts at~0. 
\begin{corollary}\label{cor:split_sigma}
On the event $\{X_{\sigma_a}=a,J_{\sigma_a}=i\}$ the process $(X_{\sigma_a+t}-a,J_{\sigma_a+t}),t\geq 0$ has the law $\p^\uparrow_{i}$, assuming that this event has positive probability, and in particular $i\in E^\nearrow\cup E^\sim$.
\end{corollary}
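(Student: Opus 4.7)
The plan is to combine Proposition~\ref{prop:split_sigma} with the identification of $\p_i^\uparrow$ as the Markov law having transition semigroup $\{\p^\uparrow_{x,j}\}_{x>0,\,j\in E}$ started from $(0,i)$, an identification which is available precisely when $i\in E^\nearrow\cup E^\sim$.

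First I would verify that the conditioning event forces $i\in E^\nearrow\cup E^\sim$, which explains the parenthetical claim. By the very definition of the last exit time, on $\{X_{\sigma_a}=a,\,J_{\sigma_a}=i\}$ one has $X_t>a$ for all $t\in(\sigma_a,\zeta)$, so the shifted post-$\sigma_a$ process starts at $(0,i)$ and remains strictly positive in a right neighbourhood of $0$. If $i\in E^\searrow$, then $0$ is irregular for $(0,\infty)$ under $\p_i$, so from $(a,i)$ the original process almost surely re-enters $(-\infty,a]$ in arbitrarily small time; applying the strong Markov property to a suitable approximation of $\sigma_a$ produces a contradiction, so the event has probability zero in this case.

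Having restricted to $i\in E^\nearrow\cup E^\sim$, Proposition~\ref{prop:split_sigma} gives that on our event the shifted process $(X_{\sigma_a+t}-a,J_{\sigma_a+t})_{t\geq 0}$ is a Markov process with the stated semigroup and initial state $(0,i)$. It remains to identify this law with $\p_i^\uparrow$. For $i\in E^\nearrow$, the semigroup extends to $x=0$ since $\p_{0,i}(\tau_0^-=\infty)>0$, and the explicit description $\p^\uparrow_i=\p_i(\cdot\mid\underline X\geq 0)$ noted in Section~\ref{sec:positive} shows that $\p_i^\uparrow$ coincides with the Markov law having this extended semigroup and started at $(0,i)$. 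For $i\in E^\sim$, I would invoke the Skorokhod convergence $\p^\uparrow_{x,i}\Rightarrow\p^\uparrow_i$ as $x\downarrow 0$ recorded in Section~\ref{sec:positive}, which characterises $\p^\uparrow_i$ as the canonical entrance law of the semigroup at $(0,i)$.

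The main obstacle is the case $i\in E^\sim$: the semigroup $\p^\uparrow_{x,j}$ is only defined for $x>0$, so one must rely on the cited convergence result to pin down the correct entrance law at $0$, as there is no direct conditioning description of $\p^\uparrow_i$ in this regime. Once the $x\downarrow 0$ limit is in hand, the rest is simply bookkeeping of the splitting identity.
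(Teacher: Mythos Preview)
Your proposal is correct and follows essentially the same route as the paper: rule out $i\in E^\searrow$ by a regularity argument, then invoke Proposition~\ref{prop:split_sigma} together with the convergence $\p^\uparrow_{x,i}\to\p^\uparrow_i$ as $x\downarrow 0$ from Section~\ref{sec:positive} to identify the law of the post-$\sigma_a$ process. The only cosmetic difference is that you treat $i\in E^\nearrow$ separately via the extended semigroup at $x=0$, whereas the paper handles both cases at once through right-continuity of paths and the convergence result.
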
 
\begin{proof}
It is easy to see that $i\notin E^\searrow$, because otherwise $X^i$ is either a compound Poisson process or a process which visits a level at discrete times (if ever) and goes immediately below this level. In any case the event of interest has 0 probability.
Now right-continuity of paths and convergence of $\p^\uparrow_{\epsilon,i}$ towards $\p^\uparrow_i$ as $\epsilon\downarrow 0$ implies the result.
\end{proof}
Note that the results of Proposition~\ref{prop:split_sigma} and Corollary~\ref{cor:split_sigma} also hold for the process $(X,J)$ under $\p^\uparrow$ with the same law for the post-$\sigma_a$ process, see also~\cite[Cor.\ VII.19]{bertoin} for the case of \levy processes with no positive jumps.

Finally, we identify the cases when $\{X_{\sigma_a}=a\}$ has positive probability.
%In this respect we will make the following simplifying assumption:
%\begin{equation}\label{eq:assumption}
%\text{none of }X^i\text{ is a compound Poisson process whose jump measure has atoms.}
%\end{equation}
It is said that a \levy process $X$ \emph{admits continuous passage upward} (creeps upward) if $\p(X_{\tau_x}=X_{\tau_x-})$ for some $x>0$ (and then for all), which is equivalent to $\p(X_{\tau_x}=x)>0$, assuming that $X$ is not a compound Poisson process. Sufficient and necessary conditions for this are given in~\cite[Thm.\ 7.11]{kyprianou}.

\begin{lemma}\label{lem:continuous_last}
Suppose $X$ is not monotone and $X^i$ is not a compound Poisson process. Then $\p_i(\sigma_a>0,X_{\sigma_a}=a,J_{\sigma_a}=j)>0$ if and only if $X^j$
admits continuous passage upward.
Moreover, on this event there is no jump and no phase switch at~$\sigma_a$.
\end{lemma}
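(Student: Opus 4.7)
My plan is to reduce both directions to the classical creeping characterization for the L\'evy process $X^j$, after first showing that on $\{X_{\sigma_a}=a\}$ the epoch $\sigma_a$ is almost surely a continuity point for both $X$ and $J$. I would organize this in three stages; I anticipate that ruling out jumps and phase switches at $\sigma_a$ is the main technical hurdle.

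\emph{Step 1 (the ``moreover'' clause).} First show that on $\{\sigma_a>0,\ X_{\sigma_a}=a\}$ there is almost surely neither a jump of $X$ nor a phase switch at $\sigma_a$. The jump and phase-switch times form a countable family of stopping times $\{T_n\}$; at each $T_n$, $X_{T_n}=a$ would force $X_{T_n-}$ to take a specific value (namely $a$ minus either a phase-switch displacement $U_{J_{T_n-},J_{T_n}}$ or a L\'evy jump of $X^{J_{T_n-}}$). By strong Markov at $T_n-$ combined with a standard absolute-continuity argument---here the hypotheses that $X$ is not monotone and $X^i$ is not compound Poisson enter, ensuring the relevant pre-jump distribution has no atom at the required value---one has $\p(X_{T_n-}=c)=0$ for every constant $c$. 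A countable union then gives $X_{\sigma_a-}=X_{\sigma_a}=a$ and $J_{\sigma_a-}=J_{\sigma_a}=j$ on the event.

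\emph{Step 2 (necessity).} Given Step 1, $J$ is constantly $j$ on a random left-neighborhood $[S,\sigma_a]$, where $S$ is the last phase-switch epoch strictly before $\sigma_a$ (or $0$ if none). By strong Markov at $S$, on $\{J_S=j,\ X_S=y<a\}$ the segment $(X_{S+t}-y)_{t\in[0,\sigma_a-S]}$ is distributed as the L\'evy process $X^j$, and it continuously reaches the level $a-y$ at time $\sigma_a-S$ before strictly exceeding it. Since this event has positive probability, the classical creeping criterion (\cite[Thm.\ 7.11]{kyprianou}) forces $X^j$ to admit continuous upward passage.

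\emph{Step 3 (sufficiency).} Conversely, assume $X^j$ admits continuous upward passage. By irreducibility of $J$ together with non-monotonicity of $X$, with positive probability there is a stopping time $T<\zeta$ satisfying $J_T=j$ and $X_T<a$. Applying strong Markov at $T$ and combining L\'evy creeping of $X^j$ with the exponential holding time in phase $j$ being sufficiently long, with positive probability the MAP continuously reaches level $a$ in phase $j$ before any phase switch. At that instant, Proposition~\ref{prop:split_sigma} and Corollary~\ref{cor:split_sigma} apply: creeping forces $j\in E^\nearrow\cup E^\sim$, so the post-arrival law $\p^\uparrow_j$ is non-degenerate and assigns positive probability to paths that remain strictly above $0$ until killing (phase-dependent killing $q_j>0$ is essential in the $E^\sim$ subcase). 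Combining yields $\p_i(\sigma_a>0,\ X_{\sigma_a}=a,\ J_{\sigma_a}=j)>0$.
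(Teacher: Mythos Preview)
Your Step~1 is essentially the paper's argument (the paper phrases it as ``$X$ is diffuse at each phase switch and cannot jump onto level~$a$''), though your wording that $X_{T_n}=a$ ``forces $X_{T_n-}$ to take a specific value'' is imprecise since the jump sizes are random; what you really need, and what the paper uses, is that $X_{T_n-}$ itself has no atoms, so neither does $X_{T_n}$.

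Steps~2 and~3, however, both contain genuine gaps, and they stem from the same source: you are conflating the \emph{last exit} from $(-\infty,a]$ with the \emph{first passage} over~$a$. In Step~2 you assert that on the phase-$j$ segment the process ``continuously reaches the level $a-y$ at time $\sigma_a-S$ before strictly exceeding it''. This is false: the process may well have exceeded level~$a$ earlier in the same phase-$j$ excursion and come back down; $\sigma_a$ is the \emph{last} time at or below~$a$, not the first crossing. Consequently you cannot invoke the classical creeping criterion for first passage (Kyprianou, Thm.~7.11) here. What is actually needed is the implication ``$\p(X_{\sigma_a}=a,\,\sigma_a\in(0,\infty))>0 \Rightarrow X^j$ creeps upward'' for a \levy process, and that is precisely Millar's Prop.~5.1, proved via time reversal---which is exactly what the paper cites.

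Step~3 fails for $j\in E^\sim$. If $j\in E^\sim$ then $0$ is regular for $(-\infty,0]$, so immediately after creeping to level~$a$ the process dips below~$a$ again; the first creeping time is therefore \emph{not} $\sigma_a$. Your appeal to $\p_j^\uparrow$ does not help: $\p_j^\uparrow$ is the law of the post-\emph{infimum} process, not the post-first-passage process, and invoking splitting at $\sigma_a$ to locate $\sigma_a$ is circular. Your parenthetical that ``$q_j>0$ is essential'' is also incorrect: the lemma only assumes $\qq\neq\bs 0$, not $q_j>0$. The correct argument again needs Millar's result applied to $X^j$ killed at the (exponential) end of the phase-$j$ sojourn: if $X^j$ creeps then its last exit \emph{within that sojourn} lies on the level with positive probability, after which the MAP (now strictly above~$a$ in some phase~$k$) stays above~$a$ until~$\zeta$ with positive probability since $\p_{x,k}(\tau_0^-=\infty)>0$ for $x>0$.

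In short, the paper's route is: establish the ``moreover'' clause (your Step~1), which localizes the problem to a single \levy segment, and then invoke Millar's time-reversal result for \levy processes to handle both implications at once. Your attempt to replace the time-reversal input by direct creeping arguments does not go through.
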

\begin{proof}
Note that $X$ must be diffuse at each phase switch, and hence it can not be at the level $a$ at these times. Moreover, $X$ can not jump onto a level $a$. This proves the second statement.
Now the first statement follows from a similar result for \levy processes, see~\cite[Prop.\ 5.1]{millar_zero_one} based on time reversal argument.

Note that we exclude the case when $\sigma_0=0$, because one may take $X^i$ to be a process of bounded variation, zero drift and such that $i\in E^\nearrow$. Such a process does not admit continuous passage upward, but the above probability would be positive. A similar problem may arise for $\sigma_a>0$ if $X^i$ is a compound Poisson process whose jump measure has atoms.
\end{proof}

% It turns out that $X^j$ must admit continuous passage upward (it creeps upward). Sufficient and necessary conditions for this are given in~\cite[Thm.\ 7.11]{kyprianou_book}. Let us formulate this as a LemmaIn particular, $i$ can not belong to $E^\searrow$.

%TIME REVERSAL!!!!!!!!!!!!!!!!!!!!!!!!!

\section{Time reversal}\label{sec:timerev}
Throughout this section it will be convenient to work with the canonical probability space, where the sample space is a set of c\`adl\`ag paths $\omega$ equipped with the Skorokhod's topology and $(X_t(\omega),J_t(\omega))=\omega_t$.
Let us define the killing operator $k_T$ and the reversal operator $r_T$.
Namely,
\begin{align*}
k_T(\omega)_t&:=\ind{t<T}(X_t,J_t)+\ind{t\geq T}(\infty,\dagger),\\
r_T(\omega)_t&:=\ind{t<T}(X_{T}-X_{(T-t)-},J_{(T-t)-})+\ind{t\geq T}(\infty,\dagger),
\end{align*}
where $X,J$ and $T$ depend on~$\omega$. If $T\geq \zeta$ then we agree that $r_T$ produces a path identically equal to $(\infty,\dagger)$. Note that $r_T$ inverts both time and space, and that the additive component is reversed in the `additive' sense, see Figure~\ref{fig:main}. In addition, it leads to a c\`adl\`ag path which may start at some level $x\neq 0$ if there is a jump of $X$ at $T$; sometimes we will consider $r_{T-}$ in order to ignore this jump, i.e.\ when reversing at the life time~$\zeta$.
Next, for an arbitrary non-negative measurable functional $F$ we define
\begin{align*}&F_T:=F\circ k_T, &\qquad\rF_T:=F\circ r_T,\end{align*}
i.e\ we apply $F$ to the original process considered up to time~$T$ and to the process reversed at~$T$.  Finally, note that in general a `MAP killed at $T$' is not a (defective) MAP unless $T$ has a very particular law, see Section~\ref{sec:defective}.

Assume for a moment that $\qq=\bs 0$ and let $\pii$ be the stationary distribution of~$J$. Consider now a deterministic time~$t$ and note that neither $X$ nor $J$ jump at $t$ a.s. 
It is well-known (and easy to see) that if $J$ is in stationarity then the process time reversed at $t$ is also a MAP (with some law~$\hat \p$) killed at~$t$:
\[\e(\rF_t,J_0=i,J_t=j)=\hat\e(F_t,J_t=i,J_0=j).\]
Therefore, we have the following basic time reversal identity
\begin{equation}\label{eq:timerev1}
\pi_i\e_i(\rF_t,J_t=j)=\pi_j\hat\e_j(F_t,J_t=i),
\end{equation}
because $J$ is in stationarity under both $\p$ and $\hat \p$.
It is easy to see that this identity still holds true if the same $\qq\geq \bs 0$ is used to kill the process under $\e$ and $\hat \e$; notice that killing amounts to multiplying functionals $\rF_t$ and $F_t$ by $e^{-\langle \bs q,\bs t\rangle}$ which clearly preserves the correspondence, because $\bs t$ is the vector of total times spent in different phases and so it is invariant under time reversal.
Thus we again assume implicit killing with rates $\qq$ in the following, and determine $\pii$ by $\pii(\Psi(0)+\Delta_\qq)=\bs 0$.  

In matrix notation we have
\[\e[\rF_{t};J_{t}]=\Delta_\pii^{-1}\hat\e[F_{t};J_{t}]^T\Delta_\pii.\]
Taking $F=e^{\a X_{t-}}$ and noting that $F_t=\rF_t=e^{\a X_{t-}}$ we get 
\[e^{\Psi(\a)t}=\e[e^{\a X_t};J_t]=
\Delta_\pii^{-1}\hat\e[e^{\a X_t};J_t]^T\Delta_\pii=\Delta_\pii^{-1}e^{\hat \Psi(\a)^Tt}\Delta_\pii,\]
which implies a well-known relation
\[\hat \Psi(\a)=\Delta_\pii^{-1} \Psi(\a)^T\Delta_\pii.\]

In what follows we develop identities similar to~\eqref{eq:timerev1} for time reversal at the life time~$\zeta$, at the infimum $\underline G$, at the last exit time $\sigma_a$ and at the first passage time $\tau_x$ assuming the exit and passage are continuous.
It must be noted that there is a well-developed theory of time reversal for Markov processes, see~\cite{nagasawa,sharpe} and also~\cite{nagasawa_application} for an illustration. This theory of Nagasawa builds on potential theory, and thus it requires conversions between potential densities (resolvents) and the corresponding Markov processes, which makes it hard to apply in our case even when a resulting Markov process can be guessed by some other means. Instead, we follow a direct probabilistic path.

\subsection{Time reversal at the life time}\label{sec:rev_zeta}
Importantly, one can also time revert the process at the killing time $\zeta$ even though $\zeta$ depends on the process. In this case the relation becomes slightly different:
\begin{align}\label{eq:timerev2}&\pi_iq_i\e_i(\rF_{\zeta-};J_{\zeta-}=j)
=\pi_jq_j\hat\e_j(F_{\zeta-};J_{\zeta-}=i),\end{align}
which readily follows by integrating over life-time and noting that 
\[\pi_iq_i\int_0^\infty\e_i(\rF_{t};J_{t}=j)q_j\D t=\pi_jq_j\int_0^\infty\hat\e_j(F_{t};J_{t}=i)q_i\D t\]
according to~\eqref{eq:timerev1}.
Note that $F_{\zeta-}=F$, and above we chose the first one for the reason of symmetry only.
Note also, that killing at an independent exponential time $e_q$ means that $q_i=q$ and hence~\eqref{eq:timerev2} simplifies to~\eqref{eq:timerev1} with $t=\zeta-$, which is not true for general killing.

\subsection{Time reversal at the infimum}\label{sec:rev_inf}
Similarly to the case of \levy processes we may express the law of a MAP time reversed at its infimum  and supremum through the law of this MAP conditioned to stay non-positive and positive, respectively. These identities follow from time reversal at the killing time together with splitting at the extrema. Importantly, there is a new term in these identities as compared to the \levy case, see the definition of $\underline c_j$ below.

Recall that splitting at the infimum holds either at $\underline G$ or $\underline G-$ depending on the scenario, see also Figure~\ref{fig:scenarios}. This difficulty is resolved using the following definition:
\begin{align*}
\hat {\underline F}&:=\rF_{\underline G}\ind{X_{\underline G}= \underline X}+\rF_{\underline G-}\ind{X_{\underline G}>\underline X},
%,\\
%\hat J_0&:=J_{\underline G-}\ind{X_{\underline G}\geq X_{\underline G-}}+J_{\underline G}\ind{X_{\underline G}> X_{\underline G-}}.
\end{align*}
which has to be compared with the definition of~$\underline J$.

%That is, time-reversal is at $\underline G$ if $X$ jumps down at its infimum and at $\underline G-$ otherwise (including when $\underline G=\zeta$). In fact, this definition is motivated by the following crucial observation: $\hat J_0=\overline J\circ r_{\zeta-}$, i.e. $\hat J_0$ is the correct phase for splitting at supremum for the time-reversed process.
%It is important to time-reverse at $\underline G-$ when there is no jump of $X$ at its infimum, because of the third scenario in Figure~\ref{fig:scenarios}. Apart from this latter scenario we have
%$\hat J_0=\underline J$.

\begin{theorem}\label{thm:rev_inf}
It holds that
\begin{align*}
\pi_iq_i\e_i(\hat{\underline F};\underline J=j)=c_j\hat\e^\downarrow_j(F_{\zeta-};J_{\zeta-}=i), 
\end{align*}
where $c_j:=\sum_{i}\pi_iq_i\p_i(\underline J=j)$.
\end{theorem}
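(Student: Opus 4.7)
The plan is to reduce the claim to the basic time-reversal identity \eqref{eq:timerev2} combined with splitting at the supremum of the dual process. The key path-level observation is that reversing at the life time $\zeta$ sends the pre-infimum piece of the original path to the shifted post-supremum of the reversed path; the asymmetric choice of $\underline G$ versus $\underline G-$ in the definition of $\hat{\underline F}$ is exactly what makes this correspondence an identity. Concretely, I expect $\hat{\underline F}(\omega) = \sF(r_\zeta(\omega))$, where $\sF$ denotes $F$ applied to the shifted post-supremum of a path, and $\{\underline J(\omega) = j\} = \{\overline J(r_\zeta(\omega)) = j\}$ (this is precisely why $\overline J$ uses the $-$-indicators complementarily to $\underline J$, cf.~the comment after~\eqref{eq:overlineJ}). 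The first step is to verify these two identifications case by case, using Lemma \ref{lem:underlineJ}, its supremum analogue, and the three scenarios of Remark \ref{rem:switch}.

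Given this correspondence, applying \eqref{eq:timerev2} to the functional $\sF\cdot\ind{\overline J=j}$ and summing over the terminal phase yields
\begin{equation*}
\pi_i q_i \e_i(\hat{\underline F}; \underline J = j) = \sum_k \pi_k q_k \hat\e_k\bigl(\sF\,\ind{J_{\zeta-}=i};\,\overline J=j\bigr).
\end{equation*}
On the right-hand side I invoke splitting at the supremum under $\hat\p_k$ (the $\overline J$-version discussed after~\eqref{eq:overlineJ}): since both $\sF$ and $\ind{J_{\zeta-}=i}$ depend only on the shifted post-supremum piece, splitting factorises
\begin{equation*}
\hat\e_k\bigl(\sF\,\ind{J_{\zeta-}=i};\,\overline J=j\bigr) = \hat\p_k(\overline J=j)\,\hat\e_j^\downarrow(F_{\zeta-};\,J_{\zeta-}=i),
\end{equation*}
using that under $\hat\p_j^\downarrow$ the process is its own shifted post-supremum (its supremum equals $0$ and is attained at time~$0$, so both shifts are trivial), and hence $\sF$ collapses to $F$.

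It remains to identify $\sum_k \pi_k q_k \hat\p_k(\overline J=j)$ with the constant $c_j$ of the statement. This follows by applying \eqref{eq:timerev2} once more to the plain functional $\ind{\overline J=j}$ and summing over both endpoint phases, which gives $\sum_i \pi_i q_i \p_i(\underline J = j) = \sum_k \pi_k q_k \hat\p_k(\overline J=j)$, as required. The main obstacle will be the path-level bookkeeping around $\underline G$: the possible jump of $X$ at $\underline G$ and the possible phase switch there (the three scenarios of Remark \ref{rem:switch}) must each be matched exactly to the corresponding features at the supremum of the reversed path. This matching is precisely what the asymmetric definitions of $\hat{\underline F}$, $\underline J$ and $\overline J$ are designed to ensure; once that correspondence is nailed down, the splitting and reversal inputs combine essentially mechanically.
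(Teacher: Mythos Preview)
Your proposal is correct and follows essentially the same route as the paper: combine the path-level identification $\underline J=\overline J\circ r_{\zeta-}$ (and the corresponding match of $\hat{\underline F}$ with the shifted post-supremum functional) with time reversal at the life time~\eqref{eq:timerev2} and splitting at the supremum of the dual. The only cosmetic differences are that you sum over the terminal phase $k$ directly rather than first deriving the two-sided identity and reading off $c_j$ as a ratio, and that you should write $r_{\zeta-}$ rather than $r_\zeta$ (the latter is the trivial path by convention).
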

\begin{proof} 
We may assume that the entries of $\qq$ are strictly positive, because the other case follows by taking limits.
Time reversal at the killing time~\eqref{eq:timerev2} and splitting at the infimum and also at the supremum for the reversed process yield
\begin{align*}
\pi_i q_i\e_i(\hat{\underline F};\underline J=j)\p_j^\uparrow(J_{\zeta-}=k)=
\pi_k q_k\hat \p_k(\overline J=j)\hat\e^\downarrow_j(F_{\zeta-};J_{\zeta-}=i).
\end{align*}
Here we have used the fact that $\underline J$ coincides with $\overline J\circ r_{\zeta-}$, see the corresponding definitions~\eqref{eq:underlineJ} and~\eqref{eq:overlineJ} and check the third scenario of Figure~\ref{fig:scenarios}.
In particular, we also have
\[\frac{\pi_iq_i\p_i(\underline J=j)}{\hat\p^\downarrow_j(J_{\zeta-}=i)}=\frac{\pi_k q_k\hat \p_k(\overline J=j)}{\p_j^{\uparrow}(J_{\zeta-}=k)}=:c_j,\]
which must be a constant depending on~$j$ only.
This proves the main identity and the expression for $c_j$ follows by taking $F=1$ and summing up over~$i$.
\end{proof}

We conclude this section by noting that Theorem~\ref{thm:rev_inf} will be useful in the context of the Wiener-Hopf factorization, see Section~\ref{sec:WH}.

\subsection{Time reversal at last exit}\label{sec:rev_last}
It is well-known that a \levy process time reversed at $\sigma_a$ has the law of the original process considered up to $\sigma_a$, where in both cases we condition on~$X_{\sigma_a}=a$. The aim of this section is to generalize this result to the setting of MAPs. It turns out that the MAP case is considerably harder to deal with, and that it leads to some additional non-trivial terms in the corresponding identity.
The following result lays the basis for time reversal at~$\sigma_a$. Moreover, it can be extended to provide an alternative proof of splitting at $\sigma_a$ (last exit becomes first passage for the process `run backwards').
%We say that a \levy process $X$ hits points if $\p(\exists t: X_t=x)>0$ for some~$x>0$, and hence for all $x>0$, see.

\begin{proposition}\label{prop:rev_last}
If $X^j$ admits continuous passage upward then for any $a\in\mathbb R$ and any bounded continuous functional~$F$ it holds that 
\[\pi_iq_i\hat\e_i(\rF_{\sigma_a};X_{\sigma_a}=a,J_{\sigma_a}=j)=u_j L^a_{ji}(F),\]
where 
\begin{align}
\label{eq:u}u_j&:=\sum_k\int_0^\infty\pi_kq_k\p_k(X_{\tau_x}=x,J_{\tau_x}=j)\D x,\\
\label{eq:L}L^a_{ji}(F)&:=\lim_{\epsilon\downarrow 0}\frac{1}{\epsilon}\e_j(F;X_{\zeta-}\in(a,a+\epsilon),J_{\zeta-}=i).
\end{align}
\end{proposition}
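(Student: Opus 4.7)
The plan is to apply the time reversal identity at the life time from Section~\ref{sec:rev_zeta} to a functional that packages the reversal at $\sigma_a$, and then to read off the right-hand side after invoking the strong Markov property at a first passage time of the original process. Concretely, I would take the bounded measurable functional
\[H(\omega):= F(r_{\sigma_a(\omega)}(\omega))\,\ind{X_{\sigma_a(\omega)}(\omega) = a,\,J_{\sigma_a(\omega)}(\omega) = j},\]
so that $\pi_i q_i\,\hat\e_i[H]$ is exactly the LHS of the proposition. Applying~\eqref{eq:timerev2} with the two phase indices swapped and summing over the terminal phase $k$ yields
\[\pi_i q_i\,\hat\e_i[H]=\sum_k \pi_k q_k\,\e_k[H\circ r_{\zeta-};\,J_{\zeta-}=i].\]

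The key pathwise step is to evaluate $H\circ r_{\zeta-}(\omega)$ for $\omega$ under $\p_k$. Writing $b(\omega):=X_{\zeta-}(\omega)-a$, a direct calculation from the definition of $r_T$ gives
\[\sigma_a(r_{\zeta-}\omega)=\zeta(\omega)-\tau_{b(\omega)}(\omega),\]
and the event $\{X_{\sigma_a(r_{\zeta-}\omega)}(r_{\zeta-}\omega)=a,\,J_{\sigma_a(r_{\zeta-}\omega)}(r_{\zeta-}\omega)=j\}$ coincides, up to a null set, with the continuous upward passage event $\{X_{\tau_b}=b,\,J_{\tau_b}=j\}$ of the original $\omega$ at the self-consistent level $b=b(\omega)$; here I invoke Lemma~\ref{lem:continuous_last} to exclude jumps or phase switches at $\tau_b$. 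Moreover the doubly reversed path $r_{\sigma_a(r_{\zeta-}\omega)}(r_{\zeta-}\omega)$ is precisely the post-$\tau_b$ piece of $\omega$ translated by $-b$ in the additive coordinate, so
\[H\circ r_{\zeta-}(\omega)=F\big((X_{\tau_b+s}-b,\,J_{\tau_b+s})_{s\geq 0}\big)\,\ind{X_{\tau_b}=b,\,J_{\tau_b}=j}.\]

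Finally I would disintegrate the expectation against $X_{\zeta-}\in\D y$ with $b=y-a$, and apply the strong Markov property of the MAP at the stopping time $\tau_b$: on $\{X_{\tau_b}=b,\,J_{\tau_b}=j\}$ the shifted post-$\tau_b$ process $(X_{\tau_b+\cdot}-b,\,J_{\tau_b+\cdot})$ has law $\p_j$ independently of the pre-$\tau_b$ history, and the event $X_{\zeta-}\in(b+a,b+a+\D y)$ translates into $X_{\zeta-}\in(a,a+\D b)$ after the shift, with the same infinitesimal width $\D b=\D y$. This yields
\[\e_k[H\circ r_{\zeta-};\,J_{\zeta-}=i]=\int_0^\infty \p_k(X_{\tau_b}=b,J_{\tau_b}=j)\,\e_j[F;\,X_{\zeta-}\in(a,a+\D b),\,J_{\zeta-}=i].\]
Summing $\sum_k\pi_k q_k$, the inner factor (independent of $b$) is $L^a_{ji}(F)\,\D b$ by~\eqref{eq:L}, while the remaining $b$-integral of $\sum_k\pi_k q_k\,\p_k(X_{\tau_b}=b,J_{\tau_b}=j)$ equals $u_j$ by~\eqref{eq:u}, producing the claimed $u_j L^a_{ji}(F)$.

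The delicate point I anticipate is the pathwise identification in the second paragraph: matching the continuous-passage events on the reversed and original sides requires careful attention to left-limits versus right-limits and to the case analysis of Lemma~\ref{lem:continuous_last} (in particular, verifying that on discontinuous upward passage $X_{\sigma_a(r_{\zeta-}\omega)}(r_{\zeta-}\omega)\neq a$, so that the indicator vanishes automatically). A secondary subtlety is the rigorous interpretation of $\e_j[F;\,X_{\zeta-}\in(a,a+\D b),\,J_{\zeta-}=i]$ as $L^a_{ji}(F)\,\D b$; this can be bypassed by first running the whole argument with $F$ replaced by $F\cdot\ind{X_{\zeta-}\in(a,a+\epsilon)}/\epsilon$ and letting $\epsilon\downarrow 0$ at the end via bounded convergence.
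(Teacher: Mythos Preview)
Your approach is essentially the paper's: apply time reversal at $\zeta$ via~\eqref{eq:timerev2}, identify $\sigma_a\circ r_{\zeta-}$ with $\zeta-\tau_b$ where $b=X_{\zeta-}-a$, and use the strong Markov property at~$\tau_b$ to factor. Your pathwise computation of the doubly reversed path is correct, and the paper's last paragraph addresses exactly the left/right limit issue you flag.

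The one place where your write-up diverges is the rigorous handling of the disintegration in $b$, and there your proposed bypass does not work as stated. Replacing $F$ by $F\cdot\ind{X_{\zeta-}\in(a,a+\epsilon)}/\epsilon$ in the proposition's left side means evaluating $X_{\zeta-}$ on the path $r_{\sigma_a}\omega$; but $(r_{\sigma_a}\omega)^X_{\zeta-}=X_{\sigma_a}-X_0=a$ on the event $\{X_{\sigma_a}=a\}$, so the indicator vanishes identically and you get $0=u_j L^a_{ji}(F)/\epsilon$. The paper resolves this differently: it places the $\epsilon$-window on the $\hat\e_i$ side by introducing the extra variables $X_{\zeta-}-a\in(x,x+\epsilon)$ and $J_{\zeta-}=k$, first proving that this measure is absolutely continuous in~$x$ (using that $X^j$ hits points and hence has a potential density), then invoking Lebesgue differentiation. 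Only after that does it pass to the $\p_k$ side via~\eqref{eq:timerev2} and split at the genuine stopping time~$\tau_x$ in the pre-limit, with the observation that the relevant time $\tau$ satisfies $\tau\in[\tau_x,\tau_{x+\epsilon}]\to\tau_x$. This is exactly the step that turns your formal ``$\D b$'' into a rigorous argument; the rest of your plan matches the paper.
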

\begin{proof}
Notice that $X^j$ hits points and so according to~\cite[Thm.\ II.16]{bertoin} the random variable $X^j(e_q)$ must have a density.
Standard results about convolutions show that the measure $\hat\p_i(J\text{ visits }j, X_{\zeta-}\in\D x)$ should be absolutely continuous (with respect to the Lebesgue measure), and so should be $\pi_iq_i\hat\e_i(\rF_{\sigma_a};X_{\sigma_a}=a,J_{\sigma_a}=j,X_{\zeta-}-a\in \D x,J_{\zeta-}=k)$.
 According to the Lebesgue differentiation theorem the corresponding density can be expressed as
\[\lim_{\epsilon\downarrow 0}\frac{\pi_iq_i}{\epsilon}\hat \e_i(\rF_{\sigma_a};X_{\sigma_a}=a,J_{\sigma_a}=j,X_{\zeta-}-a\in(x,x+\epsilon),J_{\zeta-}=k).\]
Now use time reversal at the killing time~\eqref{eq:timerev2} and splitting at~$\tau_x$ to express this density as
\[\pi_kq_k\p_k(X_{\tau_x}=x,J_{\tau_x}=j)L^a_{ji}(F),\]
see~\eqref{eq:L} and Figure~\ref{fig:rev_last}.
Integrating over $x>0$ and summing up over $k$ yields the result.
% and recall that there is no jump and no phase switch at $\sigma_a$.

Application of time reversal and splitting requires additional commentary. 
Let $\tau=\sigma_a\circ_{\zeta-}$ be the time in the new coordinates corresponding to $\sigma_a$; in Figure~\ref{fig:rev_last} this is the first time from the right when the path hits the dotted line. Note that $X_{\tau}\in(x,x+\epsilon)$ and thus $\tau\in[\tau_{x},\tau_{x+\epsilon}]$. But $\tau_{x+\epsilon}$ converges to $\tau_x$, and so must the corresponding values of $(X,J)$. So in the limit we have $(X_{\tau_x},J_{\tau_x})=(x,j)$ on the event of interest.
Moreover, observe that $\p_k(x<X_{\tau_x}<x+\epsilon)\rightarrow 0$ and hence we may work on the event that $X$ upcrosses $x$ continuously in the pre-limit. Finally splitting at $\tau_{x}$ (in the pre-limit) yields the result.
\end{proof}
\begin{figure}[h!]
\centering
\includegraphics[width=.6\linewidth]{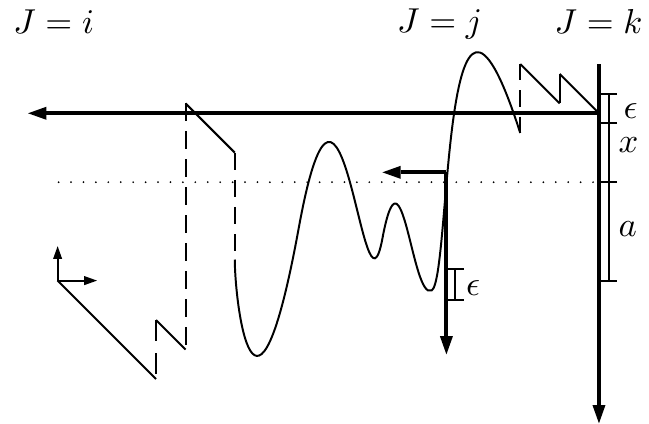}
\caption{Time reversal at $\zeta$ and splitting at $\tau_x$}
\label{fig:rev_last}
\end{figure}

%Let us note that in Proposition~\ref{prop:rev_last} it is enough to assume that $X^j$ hits points $x>0$, which necessarily must be the case if $X_{\sigma_a}=a,J_{\sigma_a}=j,\sigma_a>0$, i.e.\ it hits at least one point and it is not a compound Poisson process, see~\cite{kesten}. Thus Proposition~\ref{prop:rev_last} provides an alternative proof of Lemma~\ref{lem:continuous_last}.
\begin{remark}
It is noted that~\eqref{eq:L} equals
\[\lim_{\epsilon\downarrow 0}\frac{q_i}{\epsilon}\int_0^\infty\e_j(F_t;X_{t}\in(a,a+\epsilon),J_{t}=i)\D t,\]
and for $F=1$ this is just the potential density of $(X,J)$ at $(a,i)$ scaled by~$q_i$. The potential density, however, is determined for almost all levels~$a$ and hence one has to be careful.
In particular, one can substitute the interval $(a,a+\epsilon)$
by $(a-\epsilon/2,a+\epsilon/2)$ for almost all $a$, but not for $a=0$ when $i=j\in E^\nearrow$.
\end{remark}

We are ready to formulate our main result of this section.
\begin{theorem}\label{thm:rev_last}
For $a\in\mathbb R$ it holds that
\[u_i\e_i(\rF_{\sigma_a};X_{\sigma_a}=a,J_{\sigma_a}=j)=\hat u_j\hat \e_j(F_{\sigma_a};X_{\sigma_a}=a,J_{\sigma_a}=i),\]
where $u_i, \hat u_j$ are defined in~\eqref{eq:u}.
Moreover, both sides are identically zero if $X^i$ or $X^j$ does not admit continuous passage upward.
\end{theorem}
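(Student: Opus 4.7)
The plan is to combine Proposition~\ref{prop:rev_last} (applied to the dual MAP) with two invocations of the basic time-reversal identity~\eqref{eq:timerev2} and classical splitting at a first-passage time. For the LHS, apply Proposition~\ref{prop:rev_last} to the dual MAP (whose own dual is $(X,J)$ and which shares $\pii$ and $\qq$): this gives $\pi_iq_i\e_i(\rF_{\sigma_a};X_{\sigma_a}=a,J_{\sigma_a}=j)=\hat u_j\hat L^a_{ji}(F)$, so the LHS of the theorem equals $u_i\hat u_j\hat L^a_{ji}(F)/(\pi_iq_i)$.

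For the RHS, apply~\eqref{eq:timerev2} to the functional $G(\om):=F_{\sigma_a}(\om)\ind{X_{\sigma_a}=a,J_{\sigma_a}=i}$ and sum over the initial phase $\alpha$; the right-hand side of the resulting identity is precisely $\pi_jq_j\hat\e_j(F_{\sigma_a};X_{\sigma_a}=a,J_{\sigma_a}=i)$. On the left-hand side, the operator $r_{\zeta-}$ turns the event $\{X_{\sigma_a}=a,J_{\sigma_a}=i\}$ inside $G$ into the continuous first-passage event $\{X_{\tau_y}=y,J_{\tau_y}=i\}$ of $\om$ with $y:=X_{\zeta-}-a$, while the prefix of $r_{\zeta-}\om$ up to $\sigma_a$ becomes the reversed suffix of $\om$ from $\tau_y$ to $\zeta$. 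Classical splitting at the stopping time $\tau_y$ makes this suffix an independent $\p_i$-copy, and matching the $y$-integration with the Lebesgue density in $L^a_{ij}(\rF_{\zeta-})$ yields
\[\sum_\alpha\pi_\alpha q_\alpha\e_\alpha(G\circ r_{\zeta-};J_{\zeta-}=j)=L^a_{ij}(\rF_{\zeta-})\sum_\alpha\int_0^\infty\pi_\alpha q_\alpha\p_\alpha(X_{\tau_y}=y,J_{\tau_y}=i)\D y=u_iL^a_{ij}(\rF_{\zeta-}),\]
whence the RHS of the theorem equals $u_i\hat u_j L^a_{ij}(\rF_{\zeta-})/(\pi_jq_j)$.

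A second application of~\eqref{eq:timerev2}, now to $F\cdot\ind{X_{\zeta-}\in(a,a+\epsilon)}$, uses that $X_{\zeta-}$ is invariant under $r_{\zeta-}$ when $X_0=0$ and, after dividing by $\epsilon$ and sending $\epsilon\downarrow 0$, produces $\pi_iq_iL^a_{ij}(\rF_{\zeta-})=\pi_jq_j\hat L^a_{ji}(F)$ (the absolute continuity required is the one already established in the proof of Proposition~\ref{prop:rev_last}); substituting matches the two expressions for the LHS and RHS. For the vanishing statement, $\hat\Psi_{ii}=\Psi_{ii}$ implies $\hat X^i\stackrel{d}{=}X^i$, so ``admits continuous passage upward'' is self-dual, and Lemma~\ref{lem:continuous_last} then kills both sides simultaneously whenever $X^i$ or $X^j$ fails the condition (via $u_i=0$ and $\hat\p_j(X_{\sigma_a}=a,J_{\sigma_a}=i)=0$, or the symmetric pair). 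The main obstacle is the path-space bookkeeping in the second paragraph: one must carefully relate the $\sigma_a$-indexed reversal inside $G$ to the $\zeta-$ reversal applied to $\om$, and align the $y$-integration cleanly with the Lebesgue-density definition of $L^a_{ij}$.
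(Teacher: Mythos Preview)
Your proposal is correct and follows essentially the same route as the paper: apply Proposition~\ref{prop:rev_last} to the dual to handle the LHS, obtain the companion identity $\pi_jq_j\hat\e_j(F_{\sigma_a};X_{\sigma_a}=a,J_{\sigma_a}=i)=u_iL^a_{ij}(\rF_{\zeta-})$ for the RHS, and match via $\pi_iq_iL^a_{ij}(\rF_{\zeta-})=\pi_jq_j\hat L^a_{ji}(F)$. Your second paragraph (time reversal at $\zeta-$ plus splitting at $\tau_y$) is exactly the argument the paper compresses into ``a reasoning along the same lines also yields''; you could shorten by simply invoking Proposition~\ref{prop:rev_last} for the original MAP with indices swapped and functional $\rF_{\zeta-}$ in place of $F$, but what you wrote is the content of that invocation. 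The vanishing argument is also the paper's, though be aware that Lemma~\ref{lem:continuous_last} excludes the case $\sigma_a=0$ and compound Poisson $X^j$; the paper disposes of these by noting that both force $i=j$ or $X^j$ compound Poisson, whence $\hat u_j=0$ as well.
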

\begin{proof}
If $X^i$ does not admit continuous passage upward then $u_i=0$ and $\hat\p_j(X_{\sigma_a}=a,J_{\sigma_a}=i)=0$ apart from some exceptions. The latter can be violated only if $\sigma_a=0$ and so $i=j$ or $X^j$ is a compound Poisson process, see also Lemma~\ref{lem:continuous_last}. But both imply that $X^j$ does not admit continuous passage upward and so $\hat u_j=0$.

Hence in the following we can assume that both $X^i$ and $X^j$ admit continuous passage upward. Now Proposition~\ref{prop:rev_last} shows that
\begin{align*}
&\pi_iq_i\e_i(\rF_{\sigma_a};X_{\sigma_a}=a,J_{\sigma_a}=j)=\hat u_j\hat L^a_{ji}(F).
\end{align*}
A reasoning along the same lines also yields
\[\pi_jq_j\hat\e_j(F_{\sigma_a};X_{\sigma_a}=a,J_{\sigma_a}=i)=u_iL^a_{ij}(\rF_{\zeta-}).\]
Finally, time reversal at the killing time applied to~\eqref{eq:L} readily shows that 
\begin{equation}\label{eq:L_rel}\pi_iq_iL^a_{ij}(\rF_{\zeta-})=\pi_jq_j\hat L^a_{ji}(F),\end{equation}
which yields the claimed identity.
\end{proof}

\subsubsection{Alternative representation}
Let us note that there is a simpler version of Proposition~\ref{prop:rev_last} in the case when $j\in E^\nearrow$. Let $T_n^{a,j}$ be the successive epochs when $X_t=a,J_t=j$. Then by the strong Markov property we have
\begin{equation}\label{eq:rev_last_alt}\e_i(\rF_{\sigma_a};X_{\sigma_a}=a,J_{\sigma_a}=j)=\sum_n\e_i(\rF_{T_n^{a,j}};T_n^{a,j}<\infty)\p_j(\underline X>0).\end{equation}
Apart from being valid only for  $j\in E^\nearrow$ this version does not immediately yield the result of Theorem~\ref{thm:rev_last}, because it is not clear a-priory how to revert time at $T_n^{a,j}$. 
Nevertheless, there is a close link between~\eqref{eq:rev_last_alt} and Proposition~\ref{prop:rev_last}. 
Assume that $q_i,q_j>0$ and that $X^j$ admits continuous passage upward, i.e.\ it is a bounded variation process with drift $d_j>0$, see~\cite[Thm.\ 7.11]{kyprianou}. Then we have
\[\e_i(\rF_{\sigma_a};X_{\sigma_a}=a,J_{\sigma_a}=j)=\frac{1}{\pi_i q_i}\hat u_j\hat L^a_{ji}(F)=\frac{1}{\pi_jq_j}\hat u_jL^a_{ij}(\rF_{\zeta-}),\]
see~\eqref{eq:L_rel}.
But \[L^a_{ij}(\rF_{\zeta-})=\lim_{\epsilon\downarrow 0}\frac{q_j}{\epsilon}\int_0^\infty\e_i(\rF_t;X_{t}\in(a,a+\epsilon),J_{t}=j)\D t\]
and since $X^j_t/t\rightarrow d_j$ as $t\downarrow 0$, see~\cite[Prop.\ VI.11]{bertoin}, 
we conjecture that
\[L^a_{ij}(\rF_{\zeta-})=\frac{q_j}{d_j}\sum_n\e_i (\rF_{T_n^{a,j}};T_n^{a,j}<\infty).\]
It is not hard to prove this conjecture under assumption that the limit and the expectation can be interchanged.
Thus under this assumption the identity~\eqref{eq:rev_last_alt} and Proposition~\ref{prop:rev_last} are connected by the relation
\[\p_j(\underline X_t>0)=\frac{\hat u_j}{\pi_jd_j},\]
which is again rather clear intuitively.

\subsection{Time reversal at first passage}\label{sec:rev_first}
A \levy process reversed at $\tau_x,x>0$ on the event of continuous passage has the law of the process conditioned to stay positive and considered up to its last exit from $[0,x]$ on the event of continuous exit, see~\cite[Thm.\ 4.2]{duquesne}. The following Theorem shows that a similar result holds for MAPs. Here again we have additional terms $u_i,u_j$.
\begin{theorem}\label{thm:rev_first}
For $x>0$ it holds that 
\[u_i\e_i(\rF_{\tau_x},X_{\tau_x}=x,J_{\tau_x}=j)=u_j\hat\e_j^\uparrow(F_{\sigma_x};X_{\sigma_x}=x,J_{\sigma_x}=i).\]
Moreover, both sides are identically zero if $X^i$ or $X^j$ does not admit continuous passage upward.
\end{theorem}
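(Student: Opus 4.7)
The plan is to mirror the proof of Proposition~\ref{prop:rev_last} (and its use in Theorem~\ref{thm:rev_last}), adapting the density argument from reversal at the last exit $\sigma_x$ to reversal at the first passage $\tau_x$. As a preliminary step, exactly as in the opening paragraph of Theorem~\ref{thm:rev_last}, both sides vanish unless $X^i$ and $X^j$ both admit continuous passage upward: if $X^j$ does not, then $\p_i(X_{\tau_x}=x,J_{\tau_x}=j)=0$ and $u_j=0$; the symmetric argument for $X^i$ proceeds via the dual on the right-hand side. I henceforth assume both admit continuous passage upward.

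The core of the proof is a density argument in the spirit of Proposition~\ref{prop:rev_last}. Consider
\[
\mu_\epsilon(y,k):=\pi_iq_i\,\e_i\bigl(\rF_{\tau_x};X_{\tau_x}=x,J_{\tau_x}=j,X_{\zeta-}-x\in(y,y+\epsilon),J_{\zeta-}=k\bigr)
\]
for small $\epsilon,y>0$. By classical splitting at $\tau_x$, the post-$\tau_x$ piece is an independent $\p_j$-process whose killing-time marginal is absolutely continuous (for the same reason invoked in Proposition~\ref{prop:rev_last}, namely that some phase admits continuous passage upward, so points are hit with positive density). Hence $\mu_\epsilon(y,k)$ has a well-defined limit density in $y$. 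Now apply the time-reversal identity~\eqref{eq:timerev2} at $\zeta-$ together with the geometric dictionary from the proof of Proposition~\ref{prop:rev_last}: on $\{X_{\tau_x}=x\}$ the reversed process has its last exit from $(-\infty,y]$ at the continuous level $y$, so $\tau_x$ under $\p_i$ corresponds to $\sigma_y$ under the reversed $\hat\p_k$-process. This rewrites the limit density as a $\hat\p_k$-quantity at $\sigma_y$.

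Next, I split at $\sigma_y$ under $\hat\p_k$ via Proposition~\ref{prop:split_sigma}: on $\{X_{\sigma_y}=y,J_{\sigma_y}=j\}$ the post-$\sigma_y$ piece is an independent $\hat\p_j^\uparrow$-process, shifted by $y$. Sending $y\downarrow 0$, this shifted piece becomes a $\hat\p_j^\uparrow$-path killed at its own $\sigma_x$ on the event $\{X_{\sigma_x}=x,J_{\sigma_x}=i\}$, producing the factor $\hat\e_j^\uparrow(F_{\sigma_x};X_{\sigma_x}=x,J_{\sigma_x}=i)$. Summing over $k$ and integrating over $y>0$ then assembles the remaining constants: the right-hand side accumulates $u_j$ (by the same mechanism by which Proposition~\ref{prop:rev_last} produced $u_j$, translated to the dual via~\eqref{eq:L_rel}), while the left-hand side accumulates $u_i$ from the parallel integration of the continuous-first-passage density of the $\p_i$-process.

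\textit{Main obstacle.}
The chief technical difficulty is the precise matching of constants at the end, in particular verifying that the factor on the right-hand side is $u_j$ rather than $\hat u_j$ (as in Theorem~\ref{thm:rev_last}). The switch from $\hat u_j$ to $u_j$ is made possible precisely because $\hat\e_j$ is replaced by $\hat\e_j^\uparrow$, and the $\hat\p_j^\uparrow$-expectation absorbs the $\hat u_j/u_j$ ratio through the defining splitting-at-infimum structure of $\hat\p_j^\uparrow$ (Proposition~\ref{prop:splitting}). Justifying absolute continuity and interchange of limit with expectation in the density step proceeds exactly as in Proposition~\ref{prop:rev_last}, but the delicate bookkeeping of phases and constants across the Lebesgue-differentiation limit is where the real work lies.
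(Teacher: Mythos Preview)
Your argument has a genuine gap at the step where you claim that, after splitting at $\sigma_y$ under $\hat\p_k$, the post-$\sigma_y$ piece ``becomes a $\hat\p_j^\uparrow$-path killed at its own $\sigma_x$ on the event $\{X_{\sigma_x}=x,J_{\sigma_x}=i\}$''. This is not what the geometry gives you. After reversing at $\zeta-$ and splitting at $\sigma_y$, the post-$\sigma_y$ piece (shifted by $y$) does have law $\hat\p_j^\uparrow$, but the functional you are evaluating on it is $F$ applied to the \emph{entire} post-$\sigma_y$ path up to its killing time, and the constraint coming from $J_0=i$ translates into $\{J_{\zeta-}=i,\ X_{\zeta-}\in(x,x+\epsilon)\}$ for this conditioned process. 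In other words, your density argument produces an object of the form
\[
\lim_{\epsilon\downarrow 0}\frac{1}{\epsilon}\,\hat\e_j^\uparrow\bigl(F;\,X_{\zeta-}\in(x,x+\epsilon),\,J_{\zeta-}=i\bigr),
\]
which is an $L$-type density at the \emph{killing level} $x$, not the last-exit quantity $\hat\e_j^\uparrow(F_{\sigma_x};X_{\sigma_x}=x,J_{\sigma_x}=i)$ that the theorem requires. Under $\hat\p_j^\uparrow$ the killing time and the last exit $\sigma_x$ are different random times, and the two expressions are not equal. There is also an internal inconsistency: you write ``sending $y\downarrow 0$'' and then ``integrating over $y>0$'' in consecutive sentences, but these are incompatible operations on the same variable. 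Finally, the claimed mechanism by which $\hat\p_j^\uparrow$ ``absorbs the $\hat u_j/u_j$ ratio through the splitting-at-infimum structure'' is asserted rather than shown; nothing in Proposition~\ref{prop:splitting} furnishes such a ratio directly.

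The paper does not redo the density argument of Proposition~\ref{prop:rev_last} from scratch. Instead it bootstraps from the already-proven Theorem~\ref{thm:rev_last}: take the $\p_j$-process up to $\sigma_x$ on the event $\{X_{\sigma_x}=x,J_{\sigma_x}=i,\ X_{\sigma_0}=0,J_{\sigma_0}=j\}$, apply reversal at $\sigma_x$ via Theorem~\ref{thm:rev_last}, and split both sides (at $\sigma_0$ on one side, at $\tau_x$ on the reversed side) using Corollary~\ref{cor:split_sigma}. The extra $\sigma_0$ event produces matching factors $\p_j(X_{\sigma_0}=0,J_{\sigma_0}=j)$ and $\hat\p_j(X_{\sigma_0}=0,J_{\sigma_0}=j)$ on the two sides, whose ratio is exactly $\hat u_j/u_j$ by a second application of Theorem~\ref{thm:rev_last} with $F=1$. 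This is where the switch from $\hat u_j$ to $u_j$ actually happens, and it is a one-line cancellation rather than an absorption into the conditioned law.
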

\begin{proof}
The last claim is shown using similar arguments to those in the proof of Theorem~\ref{thm:rev_last} together with the interpretation of $\hat\e^\uparrow_j$ as the law of conditioned process when started from a positive level. In the following we assume that both $X^i$ and $X^j$ admit continuous passage upward.

Consider the process $(X,J)$ up to its last passage over $x$ on the event that $X_{\sigma_x}=x,J_{\sigma_x}=i$ and also $X_{\sigma_0}=0,J_{\sigma_0}=j$. Using time reversal at $\sigma_x$ and splitting at~$\sigma_0$, as well as at~$\tau_x$ for the reversed process, we write
\begin{align*}u_j&\p_j(X_{\sigma_0}=0,J_{\sigma_0}=j)\e_j^\uparrow(F_{\sigma_x};X_{\sigma_x}=x,J_{\sigma_x}=i)\\&=\hat u_i\hat\e_i(\rF_{\tau_x};X_{\tau_x}=x,J_{\tau_x}=j)\hat \p_j(X_{\sigma_0}=0,J_{\sigma_0}=j),\end{align*}
see Theorem~\ref{thm:rev_last} and Corollary~\ref{cor:split_sigma}.
Again using Theorem~\ref{thm:rev_last} we observe that
\[u_j\p_j(X_{\sigma_0}=0,J_{\sigma_0}=j)=\hat u_j\hat \p_j(X_{\sigma_0}=0,J_{\sigma_0}=j)\]
and hence
\[\hat u_j\e_j^\uparrow(F_{\sigma_x};X_{\sigma_x}=x,J_{\sigma_x}=i)=\hat u_i\hat\e_i(\rF_{\tau_x};X_{\tau_x}=x,J_{\tau_x}=j),\]
which readily leads to the result by taking the law $\hat\p$ instead of $\p$.
\end{proof}

\section{Wiener-Hopf factorization}\label{sec:WH}
%Finally, splitting and~\eqref{eq:WH_easy} implies the Wiener-Hopf factorization.
The following result is a direct consequence of splitting and reversal at the infimum and supremum, see Proposition~\ref{prop:splitting} and Theorem~\ref{thm:rev_inf}. Recall also that $\bs t$ denotes a vector of total times spent in different phases up to time~$t$, see Section~\ref{sec:defective}.
\begin{corollary}[Wiener-Hopf]\label{cor:WH}
For $\a\in\ii\mathbb R,\bb\geq \bs 0$ and implicit killing rates $\qq\neq \bs 0$ it holds that
\begin{align*}\e[e^{\a X_{\zeta-}-\langle\bb, \bs \zeta\rangle};J_{\zeta-}]&=-\left(\Psi^\bb(\a)\right)^{-1}\Delta_\qq\\
&=\e[e^{\a \underline X-\langle\bb ,\underline {\bs G}\rangle};\underline J]\e^\uparrow[e^{\a X_{\zeta-}-\langle\bb,\bs \zeta\rangle};J_{\zeta-}]\\
&=\e[e^{\a \overline X-\langle\bb ,\overline {\bs G}\rangle};\overline J]\e^\downarrow[e^{\a X_{\zeta-}-\langle\bb,\bs \zeta\rangle};J_{\zeta-}].
\end{align*}
Moreover,
\begin{align}
\label{eq:WHup}\Delta_{\bs {\overline c}}\e^\uparrow[e^{\a X_{\zeta-}-\langle\bb,\bs \zeta\rangle};J_{\zeta-}]&=
\hat\e[e^{\a \overline {X}-\langle\bb,\bs {\overline {G}}\rangle};\overline {J}]^T\Delta_\pii\Delta_\qq,\\
\label{eq:WHdown}\Delta_{\bs {\underline c}}\e^\downarrow[e^{\a X_{\zeta-}-\langle\bb,\bs \zeta\rangle};J_{\zeta-}]&=
\hat\e[e^{\a \underline {X}-\langle\bb,\bs {\underline {G}}\rangle};\underline {J}]^T\Delta_\pii\Delta_\qq,
\end{align}
where the vectors $\bs {\overline c},\bs {\underline c}$ are given by
\begin{align}\label{eq:c}&\bs {\overline c}=\hat\p[\overline J]^T\Delta_\pii\qq, &\bs {\underline c}&=\hat \p[\underline J]^T\Delta_\pii\qq.\end{align}
\end{corollary}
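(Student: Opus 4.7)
The first equality is simply the matrix identity \eqref{eq:WH_easy} recalled from Section~\ref{sec:defective}, so the task reduces to proving the two factorizations and the reversal identities \eqref{eq:WHup}--\eqref{eq:WHdown}.

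For the factorization into pre- and post-infimum parts, I would use Proposition~\ref{prop:splitting}. The key observation is the multiplicativity of the functional: since $X_{\zeta-}=\underline X+(X_{\zeta-}-\underline X)$ and $\bs\zeta=\underline{\bs G}+(\bs\zeta-\underline{\bs G})$ (regardless of whether there is a jump at $\underline G$), we have
\[ e^{\a X_{\zeta-}-\langle\bb,\bs\zeta\rangle}=e^{\a\underline X-\langle\bb,\underline{\bs G}\rangle}\cdot e^{\a(X_{\zeta-}-\underline X)-\langle\bb,\bs\zeta-\underline{\bs G}\rangle}. \]
Conditioning on $\{\underline J=k\}$, the first factor and the exit phase $\underline J$ depend only on the pre-infimum piece, while the second factor and $J_{\zeta-}$ come from the post-infimum piece whose law is $\p^\uparrow_k$. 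Summing over $k$ and reading the decomposition as a matrix product gives the first factorization; the supremum version is symmetric.

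For the reversal identity \eqref{eq:WHdown}, I would apply Theorem~\ref{thm:rev_inf} with $F=e^{\a X_{\zeta-}-\langle\bb,\bs\zeta\rangle}$, but with the roles of $\p$ and $\hat\p$ interchanged (which is legitimate since $\hat{\hat\p}=\p$ and $\pi$ is the common stationary distribution of the unkilled chain). The central step is identifying the pulled-back functional: a direct computation from the definition of $r_T$ shows that on the event $\{X_{\underline G}=\underline X\}$ one has $F\circ r_{\underline G}=e^{\a\underline X-\langle\bb,\underline{\bs G}\rangle}$, and on the complement $\{X_{\underline G}>\underline X\}$ one similarly has $F\circ r_{\underline G-}=e^{\a X_{\underline G-}-\langle\bb,\underline{\bs G}\rangle}=e^{\a\underline X-\langle\bb,\underline{\bs G}\rangle}$, because $X_{\underline G-}=\underline X$ in that scenario. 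So $\hat{\underline F}$ collapses to the same expression in both cases. The dualized Theorem~\ref{thm:rev_inf} then reads, entrywise,
\[ \pi_iq_i\hat\e_i(e^{\a\underline X-\langle\bb,\underline{\bs G}\rangle};\underline J=j)=\underline c_j\,\e_j^\downarrow(e^{\a X_{\zeta-}-\langle\bb,\bs\zeta\rangle};J_{\zeta-}=i), \]
with $\underline c_j=\sum_i\pi_iq_i\hat\p_i(\underline J=j)$, which matches the definition in \eqref{eq:c}. Rewriting in matrix notation and taking the transpose of both sides yields \eqref{eq:WHdown}; the supremum identity \eqref{eq:WHup} is obtained in exactly the same way by applying the supremum version of Theorem~\ref{thm:rev_inf} to the dual.

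The only subtle step is the identification of $\hat{\underline F}$ across both scenarios of the phase-switch dichotomy from Lemma~\ref{lem:underlineJ}; this is where the asymmetric choice $\underline G$ versus $\underline G-$ in Proposition~\ref{prop:splitting} (and the opposite convention for $\overline J$) pays off, ensuring that the reversed functional depends only on $(\underline X,\underline{\bs G})$ and not on which sub-case applies. The rest is bookkeeping: tracking which transposes and diagonal factors arise when converting entrywise identities to matrix ones, and confirming that the constant vectors produced by the dual application of Theorem~\ref{thm:rev_inf} agree with those in \eqref{eq:c}.
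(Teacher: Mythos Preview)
Your proposal is correct and follows essentially the same approach as the paper's own proof, which is a terse three-sentence sketch invoking \eqref{eq:WH_easy}, Proposition~\ref{prop:splitting}, and Theorem~\ref{thm:rev_inf} with the choice $F=e^{\a X_{\zeta-}-\langle\bb,\bs\zeta\rangle}$. You have simply supplied the details the paper omits: the multiplicative splitting of the functional, the need to apply Theorem~\ref{thm:rev_inf} to the dual (so that $\e^\downarrow$ rather than $\hat\e^\downarrow$ appears), the verification that $\hat{\underline F}$ collapses to $e^{\a\underline X-\langle\bb,\underline{\bs G}\rangle}$ in both scenarios of Lemma~\ref{lem:underlineJ}, and the transpose bookkeeping to reach the matrix form of \eqref{eq:WHdown}.
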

\begin{proof}
The first identity follows immediately from~\eqref{eq:WH_easy} and Proposition~\ref{prop:splitting} together with its counterpart for splitting at the supremum.
The identity~\eqref{eq:WHdown} follows from Theorem~\ref{thm:rev_inf} by choosing 
$F=e^{\a X_{\zeta-}-\langle\bb,\bs \zeta\rangle}$. Similalrly~\eqref{eq:WHup} follows from reversal at the supremum.
\end{proof}

Importantly, there are new non-trivial terms $\Delta_{\bs {\overline c}}$ and $\Delta_{\bs {\underline c}}$ entering the Wiener-Hopf factorization, as compared to the \levy case. 
Observe that $\overline c_j=0$ if and only if $q_i\hat\p_i(\overline J=j)=0$ for all~$i$. Thus it is enough to assume that none of $X^j$ is monotone to guarantee that $\Delta_{\bs {\overline c}}$ is invertible. Under this assumption we have
\begin{equation}\label{eq:WH}\e[e^{\a X_{\zeta-}-\langle\bb, \bs \zeta\rangle};J_{\zeta-}]= 
\e[e^{\a \underline X-\langle\bb ,\underline {\bs G}\rangle};\underline J]\Delta_{\bs {\overline c}}^{-1}\hat\e[e^{\a \overline {X}-\langle\bb,\bs {\overline {G}}\rangle};\overline {J}]^T\Delta_\pii\Delta_\qq.\end{equation}
In fact, this identity is true in general if we interpret $0/0$ as $0$, or simply replace the 0 entries of $\bs{\overline c}$ by arbitrary non-zero numbers.
\subsection{Ladder processes}
In this Section we briefly discuss so-called ladder processes, see also~\cite{kaspi,kyprianou_appendix}. In our setting these ladder processes have  $1+|E|$ additive components, where one represents height and the others track time in different phases. The associated matrix exponents are then used to provide an alternative representation of the factors $\e[e^{\a \underline X-\langle\bb ,\underline {\bs G}\rangle};\underline J]$ and $\e[e^{\a \overline X-\langle\bb ,\overline {\bs G}\rangle};\overline J]$.

Let $L_t$ be a local time of $X_t-\inf_{s\leq t} X_s$ which can be constructed by considering the intervals of time between subsequent phase switches, where $X$ evolves as a \levy process (see e.g.~\cite[Ch.\ IV]{bertoin} for the corresponding theory). 
Hence we may choose $L$ so that it evolves continuously while in a phase belonging to~$E^\sim\cup E^\searrow$, and it is a jump process (with independent exponential jumps) when the phase is in~$E^\nearrow$.
In particular, there is an exponential jump of $\underline L$ at a phase switch~$T$ if and only if $X_T=\underline X$, see also Figure~\ref{fig:scenarios}. 

Let $L^{-1}_t$ be the inverse local time and as usual let $\bs L^{-1}_t$ denote the vector of total times spent in different phases up to $L^{-1}_t$, see Section~\ref{sec:defective}.
Observe that the ladder process $((\bs L^{-1}_t,X_{L^{-1}_t}),J_{L^{-1}_t})$ is a multivariate Markov additive process.
Hence there exists matrix valued function $\underline K(\aa,\b)$ such that
\[\e[e^{-\langle\aa,\bs L^{-1}_t\rangle+\b X_{L^{-1}_t}};J_{ L^{-1}_t}]=e^{\underline K(\aa,\b)t},\]
because $L^{-1}_0=0$; here $\a_i\geq 0$ and $\b\geq 0$.
Finally, observe that $((\bs {\underline G},\underline X),\underline J)$ has the distribution of the ladder process right-before killing time, see the definition of $\underline J$ in~\eqref{eq:underlineJ} and Figure~\ref{fig:scenarios}. 
Letting
\[\bs{\underline k}:=-\underline K(0,0)\bs 1\]
be the killing rates of the ladder process, we obtain
\begin{equation}\label{eq:K}\e[e^{\a \underline {X}-\langle\bb,\bs {\underline {G}}\rangle};\underline {J}]=\int_0^\infty e^{\underline K(\bb,\a)t}\D t\Delta_{\bs {\underline k}}=-\underline K(\bb,\a)^{-1}\Delta_{\bs {\underline k}},\end{equation}
because the Perron-Frobenius eigenvalue of $\underline K(\bb,\a)$ is negative.

In a very similar way we construct a ladder process corresponding to the supremum. Recall that there is a slight difference in the definition of $\overline J$, see also the third scenario in Figure~\ref{fig:scenarios}.
More precisely, there is a jump of the local time at a phase switch $T$ if and only if $X_{T-}<X_T=\overline X$; here we do not consider 0 as the time of phase switch. Irrespective of this difference we still have
\[\e[e^{\a \overline {X}-\langle\bb,\bs {\overline {G}}\rangle};\overline {J}]=-\overline K(\bb,\a)^{-1}\Delta_{\bs {\overline k}},\]
where $\a\leq 0,\b_i\geq 0$.

Finally, observe that the factorization in~\eqref{eq:WH} can be rewritten in terms of $\overline K(\bb,\a)$ and $\hat{\underline K}(\bb,\a)$. In this respect it may be useful to note that the local times can be arbitrarily scaled by positive constants, which may be different for different phases. This leads to scaling the rows of the associated matrix exponent. In particular, the scaling can be chosen such that all the entries of $\bs{\overline k}$ are either~0 or~1.

%and assume for simplicity that $q_i=q>0$. Note that 
%$\hat\p[\overline J]=-\hat{\overline K}(0,0)^{-1}\Delta_{\bs{\hat{\overline k}}}$, and hence
%\[\Delta_{\bs{\overline c}}=-q\Delta_{\bs{\hat{\overline k}}}\Delta_{\pii\hat{\overline K}(0,0)^{-1}}.\]
%Assuming that $\bs{\hat{\overline k}}$ has strictly positive entries we obtain
%\[\left(\Psi^\bb(\a)\right)^{-1}q=\underline K(\bb,\a)^{-1}\Delta_{\bs {\underline k}}\Delta^{-1}_{\pii\hat{\overline K}(0,0)^{-1}}\left(\overline K(\bb,\a)^{-1}\right)^T\Delta_\pii.\]

\subsection{Related literature and comments}
The Wiener-Hopf factorization for MAPs has appeared in the literature in various forms, see~\cite{kaspi,klusik_palm, kyprianou_appendix}.
These works assume that the killing rate does not depend on the phase, and count the total time only as opposed to counting time for each phase separately. Our result with a particular choice of $q_i=q>0$ and $\b_i=\b\geq 0$, see~\eqref{eq:WH} and Corollary~\ref{cor:WH}, closely reminds factorizations found in~\cite{klusik_palm} and~\cite{kyprianou_appendix}. There are, however, two substantial differences. 
Firstly, we use $\underline J$ and $\overline J$ instead of $J_{\underline G}$ and $J_{\overline G}$, and secondly, we have $\Delta_{\bs {\overline c}}^{-1}$ instead of $q^{-1}\Delta_\pii^{-1}$.
In the following we show that these changes are necessary for the factorization to hold in general.

The reason to take $\underline J$ should be clear from Section~\ref{sec:split_inf}: when the process jumps up from the infimum then the post-infimum process seen from $\underline G$ (and not from $\underline G-$) must depend on the height $h$ of this jump. In fact, it has the law of the original process conditioned to stay above $-h$. Thus splitting must be done at $\underline G-$ in this case, and the appropriate phase should be~$\underline J$.
Note, however, that we may avoid this problem by taking a MAP with all the phases in $E^\sim\cup E^\nearrow$, so that $\overline J=J_{\overline G}$ and $\underline J=J_{\underline G}$.

The matrix $\Delta_{\bs {\overline c}}^{-1}$ comes from time reversal of the post-infimum process. This reversal is not as straightforward as it may seem at first. The post-infimum process is not a MAP and hence the simple identity~\eqref{eq:timerev1} can not be applied. 
Assume anyway that there is an arbitrary diagonal matrix $\Delta$ in~\eqref{eq:WH} in place of $\Delta_{\bs {\overline c}}^{-1}$.
Right-multiply~\eqref{eq:WH} by the vector of ones and take $\a=0,\bb=\bs 0$ to arrive at
\[\bs 1=\p[\underline J]\Delta\hat\p[\overline J]^T\Delta_\pii\qq.\]
Using~\eqref{eq:K} observe that $\p[\underline J]$ is non-singular. But $\p[\underline J]\bs 1=\bs 1$ and so we must have $\Delta\hat\p[\overline J]^T\Delta_\pii\qq=\bs 1$ which shows that $\Delta$ must be $\Delta_{\bs {\overline c}}^{-1}$.
Finally, for the same killing rates $q_i=q$ the matrix $\Delta$ can be replaced by $q^{-1}\Delta_\pii^{-1}$ if $\pii\hat\p[\overline J]=\pii$, which is not true in general.
%
%
%TODO: kyprianou new paper and check by MMBMs.
%++++++++++++++++++++++++++++++++++
%+++++One sided++++++++++++++++++++++++++++++++++
%++++++++++++++++++++++++++++++++++++++

\section{Spectrally negative MAPs}\label{sec:spNeg}
An important special case of a MAP is obtained assuming that $X$ has no positive jumps and that none of $X^i$ is a non-increasing process. These assumptions imply that $X$ can pass over $x>0$ only by hitting this level, and it can do so in any phase with positive probability. Now the strong Markov property implies that $J_{\tau_x},x\geq 0$ is a Markov chain (starting in $i$ under $\p_i$, because $\tau_0=0$ a.s.).
%In addition, we assume that $q_i>0$ for all $i$. 

There are three fundamental matrices associated to a spectrally negative MAP:
\begin{itemize}
\item $G$ - the transition rate matrix of the first passage Markov chain, i.e.\ it satisfies $\p[J_{\tau_x}]=e^{Gx}$ for all $x\geq 0$. It is known that $G$ is the right solution of a certain matrix integral equation, which compactly can be written as $\Psi(-G)=\matO$, see e.g.~\cite{ivanovs_G}.
\item $R$ - the left solution of the above matrix integral equation.
\item $H$ - the matrix of expected local times at~0, see~\cite{ivanovs_scale} for the precise definition.
\end{itemize}
All of these matrices can be computed from a given~$\Psi(\a)$ using e.g.\ a spectral method, see~\cite{ivanovs_G,ivanovs_risk}. Moreover, all three matrices are invertible and satisfy the identity
\[HR=GH.\]
Observe that $(X,J)$ under $\hat \p$ is also a spectrally negative MAP. Denoting the corresponding fundamental matrices by $\hat G,\hat R,\hat H$ we have the following relations
\begin{equation}\label{eq:rel_hat}\hat G=\Delta_\pii^{-1}R^T\Delta_\pii, \qquad \hat R=\Delta_\pii^{-1}G^T\Delta_\pii, \qquad \hat H=\Delta_\pii^{-1}H^T\Delta_\pii,\end{equation}
see~\cite{ivanovs_potential} for the latter facts.
Finally, we write $G^\bb,R^\bb,H^\bb$ for the fundamental matrices corresponding to $\Psi^\bb(\a)$, i.e.\ corresponding to the process additionally killed with rates $\bb$, see Section~\ref{sec:defective}.

\begin{remark}\label{rem:levy}
Consider the case when there is only one phase, i.e.\ $X$ is a \levy process. Then $G=R=-\Phi$, where $\Phi>0$ is the positive solution to $\Psi(\cdot)=0$. Moreover,~\cite[Prop.\ 4.1]{ivanovs_risk} shows that $H=1/\Psi'(\Phi)$. Recall that all these quantities depend on the implicit killing rate $q>0$. In particular, $\Phi(q)$ solves $\Psi(\cdot)-q=0$,
and $H=1/\Psi'(\Phi(q))=\Phi'(q)$.
\end{remark}
%$\bs g=-G\1$ are the transition rates of $J_{\tau_x}$ into the cemetery state~$\infty$ or $\dagger?$ 

\subsection{General remarks}
In the following we adapt the results on splitting and time reversal to the case of a spectrally negative MAP.
Firstly, observe that all the phases must belong to $E^\sim\cup E^\nearrow$, because \levy processes of bounded variation with drift $d_i\leq 0$ are non-increasing (when there are no positive jumps) and thus excluded. Hence splitting at the infimum is always implemented at $\underline G$, i.e.\ $X_{\underline G}=\underline X$ and thus $\underline J=J_{\underline G}$ a.s., see Lemma~\ref{lem:underlineJ} and Proposition~\ref{prop:splitting}~(i). Similarly, in the case of supremum $\overline J=J_{\overline G-}$, i.e.\ splitting can always be implemented at $\overline G-$. Hence under $\p_i^\uparrow$ the process starts from $(0,i)$, but under $\p_i^\downarrow$ it may start from some $(x,j)$ where $x<0$, see Section~\ref{sec:positive}. 

Secondly, all $X^i$ admit continuous passage upward and $X_{\sigma_a}=a$ for $a\geq 0$, whenever $\sigma_a<\zeta$ which is the same as $J_{\sigma_a}=i$ for some phase~$i$. This makes the results on splitting and time reversal at $\sigma_a$ particularly simple.

\subsection{Wiener-Hopf factorization}\label{sec:spWH}
Let us provide the four factors involved in Corollary~\ref{cor:WH} in explicit form.
\begin{proposition}\label{prop:spneg}
There are the identities for $\qq >\bs 0,\bb\geq \bs 0$ and implicit killing rates $\qq\neq \bs 0$:
\begin{align}
\e[e^{\a \overline X-\langle\bb ,\overline {\bs G}\rangle};\overline J]&=(\a\matI+G^\bb)^{-1}\Delta_{(G\1)},\label{eq:WH_sup}\\
\e^\downarrow[e^{\a X_{\zeta-}-\langle\bb,\bs \zeta\rangle};J_{\zeta-}]&=-\Delta_{(G\1)}^{-1}(\a\matI+G^\bb)\left(\Psi^\bb(\a)\right)^{-1}\Delta_\qq\label{eq:WH_down},\\
\e[e^{\a \underline X-\langle\bb ,\underline {\bs G}\rangle};\underline J]&=
-\left(\Psi^\bb(\a)\right)^{-1}(\a\matI+R^\bb)\Delta_{(R^{-1}\qq)},\label{eq:WH_inf}\\
\e^\uparrow[e^{\a X_{\zeta-}-\langle\bb,\bs \zeta\rangle};J_{\zeta-}]&=\Delta^{-1}_{(R^{-1}\qq)}(\a\matI+R^\bb)^{-1}\Delta_\qq,\label{eq:WH_up}
\end{align}
where $\Re(\a)\leq 0$ in~\eqref{eq:WH_sup},~\eqref{eq:WH_up}, and $\Re(\a)\geq 0,\det(\Psi^\bb(\a))\neq 0$ in~\eqref{eq:WH_down},~\eqref{eq:WH_inf}.
\end{proposition}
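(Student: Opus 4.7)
The plan is to prove~\eqref{eq:WH_sup} directly using the first-passage Markov chain, derive~\eqref{eq:WH_down} algebraically from Corollary~\ref{cor:WH}, establish~\eqref{eq:WH_up} by applying the same supremum argument to the dual MAP and invoking the reversal identity~\eqref{eq:WHup}, and finally obtain~\eqref{eq:WH_inf} again from Corollary~\ref{cor:WH}. The main obstacle is the third step, where careful bookkeeping with transposes and the dualization identities~\eqref{eq:rel_hat} is needed in order to see that the diagonal constants collapse to a purely $R$-and-$\qq$-dependent form.

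For~\eqref{eq:WH_sup}, since $X$ has no positive jumps the first-passage process $(\bs\tau_x, J_{\tau_x})_{x\geq 0}$ is a MAP indexed by the level $x$, with Laplace transform $\e_i[e^{-\langle\bb,\bs\tau_x\rangle}; J_{\tau_x}] = e^{G^\bb x}$. Because $\overline G = \tau_{\overline X}$, the event $\{\overline X \in \D x, \overline J = j\}$ corresponds to the $\bb$-weighted first-passage chain reaching level $x$ in state $j$ and then being absorbed via the original $\qq$-channel at rate $(-G\1)_j$, rather than the combined rate $(-G^\bb\1)_j$, because $\overline X$ is defined under the original process and the additional $\bb$-killing does not determine the supremum. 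Integrating against $e^{\a x}\D x$ yields
\[
\e[e^{\a\overline X - \langle\bb,\overline{\bs G}\rangle}; \overline J] = \int_0^\infty e^{\a x}e^{G^\bb x}\D x\,\Delta_{-G\1} = (\a\matI + G^\bb)^{-1}\Delta_{G\1},
\]
the integral converging because $\a\matI + G^\bb$ has spectrum in the open left half-plane under the stated assumption on $\a$. Identity~\eqref{eq:WH_down} is then immediate by left-multiplying the supremum factorization of Corollary~\ref{cor:WH} together with~\eqref{eq:WH_easy} by the inverse of the expression just computed.

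For~\eqref{eq:WH_up}, I apply the same argument to the dual MAP $\hat X$, which is again spectrally negative with $\hat G^\bb = \Delta_\pii^{-1}(R^\bb)^T\Delta_\pii$ by~\eqref{eq:rel_hat}; this gives $\hat\e[e^{\a\overline X - \langle\bb,\overline{\bs G}\rangle}; \overline J] = (\a\matI + \hat G^\bb)^{-1}\Delta_{\hat G\1}$. Substituting into~\eqref{eq:WHup}, transposing, and using $(\hat G^\bb)^T = \Delta_\pii R^\bb\Delta_\pii^{-1}$ produces
\[
\e^\uparrow[e^{\a X_{\zeta-} - \langle\bb,\bs\zeta\rangle}; J_{\zeta-}] = \Delta_{\bs{\overline c}}^{-1}\Delta_{\hat G\1}\Delta_\pii(\a\matI + R^\bb)^{-1}\Delta_\qq.
\]
To identify $\bs{\overline c} = \hat\p[\overline J]^T\Delta_\pii\qq$, I specialize the already-proved supremum formula for $\hat X$ at $\a=0$, $\bb=\bs 0$ to get $\hat\p[\overline J] = \hat G^{-1}\Delta_{\hat G\1}$, whence $\bs{\overline c} = \Delta_{\hat G\1}\Delta_\pii R^{-1}\qq$ (using $(\hat G^T)^{-1} = \Delta_\pii R^{-1}\Delta_\pii^{-1}$). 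Therefore $\Delta_{\bs{\overline c}}^{-1}\Delta_{\hat G\1}\Delta_\pii = \Delta_{R^{-1}\qq}^{-1}$, which gives~\eqref{eq:WH_up}. Finally,~\eqref{eq:WH_inf} follows by right-multiplying the infimum factorization of Corollary~\ref{cor:WH} (together with~\eqref{eq:WH_easy}) by the inverse of~\eqref{eq:WH_up}.
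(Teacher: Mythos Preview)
Your proof is correct and follows essentially the same route as the paper: a direct first-passage computation for~\eqref{eq:WH_sup}, then Corollary~\ref{cor:WH} for~\eqref{eq:WH_down}; the dual supremum formula inserted into~\eqref{eq:WHup} together with~\eqref{eq:rel_hat} for~\eqref{eq:WH_up}, and Corollary~\ref{cor:WH} again for~\eqref{eq:WH_inf}. The only omission is that Corollary~\ref{cor:WH} is stated for $\a\in\ii\mathbb R$, so the extension of~\eqref{eq:WH_down} and~\eqref{eq:WH_inf} to $\Re(\a)\geq 0$ requires an analytic-continuation remark, which the paper includes.
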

\begin{proof}
Let $\bs g:=-G\1$ be the transition rates of $J_{\tau_x}$ into the absorbing state, i.e.\ the one corresponding to $\tau_x=\infty$.
Now we can write
\begin{align*}&\e[e^{\a \overline X-\langle\bb ,\overline {\bs G}\rangle};\overline J]=
\int_0^\infty \e[e^{\a x-\langle\bb,\bs {\tau_x}\rangle};J_{\tau_x}]\Delta_{\bs g}\D x=\int_0^\infty e^{\a x}\p^\bb[J_{\tau_x}]\D x\Delta_{\bs g}
\\&=\int_0^\infty e^{(\a\matI+G^\bb) x}\D x\Delta_{\bs g}
=-(\a\matI+G^\bb)^{-1}\Delta_{\bs g},\end{align*}
because all the eigenvalues of $G^\bb$ have negative real parts. This proves~\eqref{eq:WH_sup}, and then~\eqref{eq:WH_down} follows from Corollary~\ref{cor:WH} and analytic continuation. Here we also observe that $\bs g$ must have strictly positive entries, and $\p[\overline J]=-G^{-1}\Delta_{\bs g}$.

Next, from~\eqref{eq:WHup} and~\eqref{eq:rel_hat} we obtain
\begin{align*}\e^\uparrow[e^{\a X_{\zeta-}-\langle\bb,\bs \zeta\rangle};J_{\zeta-}]&=\Delta_{\bs {\overline c}}^{-1}\left(-(\a\matI+\hat G^\bb)^{-1}\Delta_{\bs {\hat g}}\right)^T\Delta_\pii\Delta_\qq
\\&=-\Delta_{\bs {\overline c}}^{-1}\Delta_{\bs {\hat g}}\Delta_\pii(\a\matI+R^\bb)^{-1}\Delta_\qq,\end{align*}
where 
\[-\bs {\overline c}=\left(\hat G^{-1}\Delta_{\bs {\hat g}}\right)^T\Delta_\pii\qq=\Delta_{\bs {\hat g}}\Delta_\pii R^{-1}\qq <\bs 0.
\]
This yields~\eqref{eq:WH_up}, and then~\eqref{eq:WH_inf} follows from Corollary~\ref{cor:WH} and analytic continuation. 
\end{proof}

In the following we provide some comments concerning Proposition~\ref{prop:spneg} and its relation to the known results in the literature.
Firstly, according to Remark~\ref{rem:levy} we may simplify~\eqref{eq:WH_sup} and~\eqref{eq:WH_inf} to
\begin{align*}
&\e(e^{\a \overline X-\b\overline G})=\frac{-\Phi}{\a-\Phi^\b},\\
&\e(e^{\a \underline X-\b\underline G})=\frac{1}{\Psi^\b(\a)}(\a-\Phi^\beta)\frac{q}{\Phi},
\end{align*}
where $\Phi,\Phi^\b,\Psi^\b(\a)$ become $\Phi(q),\Phi(q+\b),\Psi(\a)-q-\b$ when writing $q>0$ explicitly. These formulas are well known and can be found in e.g.~\cite[Sec.\ 6.5.2]{kyprianou}.

Secondly, Proposition~\ref{prop:spneg} can be extended to all $\bs q\geq \bs 0$ under certain assumptions. Let us consider the special case of a non-defective process, i.e.\ $\qq=\bs 0$. Letting 
\[\mu:=\e_\pii X_1\]
be the stationary drift of $X$, we note that~\eqref{eq:WH_sup} only makes sense when $\mu<0$, whereas~\eqref{eq:WH_inf} when $\mu>0$. The corresponding results are obtained by letting $q_i=q\downarrow 0$. 
Identity~\eqref{eq:WH_sup} has exactly the same form in the limit, but~\eqref{eq:WH_inf} becomes
\begin{equation}\label{eq:dieker}\e[e^{\a \underline X-\langle\bb ,\underline {\bs G}\rangle};\underline J]=
\left(\Psi^\bb(\a)\right)^{-1}(\a\matI+R^\bb)\mu\Delta_{\bs r}, \qquad \bs q=0,\mu>0,\end{equation}
where $\bs r$ is defined by the requirement
\begin{equation}\label{eq:r}
R\bs r=\bs 0, \qquad \pii\bs r=1,\qquad\qquad\text{ when }\qq=\bs 0,\mu>0.
\end{equation}
Indeed, such $\bs r$ is unique, non-negative and satisfies $-qR^{-1}\bs 1\rightarrow \mu\bs r$ as $q\downarrow 0$ (recall that $R$ depends on~$q$), which follows from~\eqref{eq:rel_hat} and~\cite[Lem.\ 3]{ivanovs_potential}.
It is noted that~\eqref{eq:dieker} can be found in~\cite{dieker_mandjes}, see Thm.\ 3.2., Lem.\ 3.1 and Lem.\ 3.2 therein, and recall that we exclude non-increasing processes. It may be interesting to note that~\cite{dieker_mandjes} is based on splitting for \levy processes leading to a certain embedding. This work does not state the Wiener-Hopf factorization for MAPs explicitly, but it does provide identities~\eqref{eq:WH_sup} and~\eqref{eq:WH_inf} for $\qq=0,\b_i=\b$. Importantly, the phases at $\overline G-$ and $\underline G$ are used in that work as well.

Thirdly, using splitting we may combine~\eqref{eq:WH_sup} and~\eqref{eq:WH_down} to obtain
\begin{align*}
\e[e^{\a X_{\zeta-}+\b\overline X};J_{\zeta-}]&=\e[e^{(\a+\b)\overline X};\overline J]\e^\downarrow[e^{\a X_{\zeta-}};J_{\zeta-}]\\
&=-((\a+\b)\matI+G)^{-1}(\a I+G)\Psi(\a)^{-1}\Delta_\qq.
\end{align*}
For $q_i=q>0$ this result coincides with~\cite[Cor.\ 4.21]{ivanovs_thesis}, where a different argument based on Asmussen-Kella martingale was employed. Similarly one can verify the result of~\cite[Thm.\ 10]{kyprianou_palm}.

\subsection{Last exit from the negative half-line}\label{sec:sn_last}
In this Section we identify the total times spent in different phases up to~$\sigma:=\sigma_0$ together with the phase at~$\sigma$. We provide three different approaches: first is based on splitting at $\sigma$ and law equivalence, see Corollary~\ref{cor:split_sigma}, second on the identity for time reversal at~$\sigma$, see~Proposition~\ref{prop:rev_last}, and third on splitting at the infimum and Theorem~\ref{thm:rev_first} providing the law of the post-infimum process up to its last passage. Generalization to $\sigma_a$ is trivial when $a>0$, and for $a<0$ it is possible but requires a so-called scale matrix. Finally, note that $J_{\sigma}=i$ implies that $\sigma<\zeta$ and hence $X_{\zeta-}>0$.

%We are interested in the following quantity \[U(\bb):=\e[e^{-\langle\bb,\bs \sigma\rangle};J(\sigma)]$,
%which provides the transforms of times spend in different phases up to $\sigma$ with extra distinction if the time  $\underline G$ or not.

\begin{proposition}\label{prop:sigma}
For $\bb>0$ and $\qq> \bs 0$ it holds that
\[\e[e^{-\langle\bb,\bs \sigma\rangle};J_\sigma]=H^\bb\Delta_{(-R^{-1}\qq)}.\]
\end{proposition}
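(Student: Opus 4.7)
The plan is to use the second of the three approaches suggested just before the statement, namely time reversal at~$\sigma$ via Proposition~\ref{prop:rev_last}. Take $F=e^{-\langle\bb,\bs\sigma\rangle}$; this functional depends only on the occupation vector, which is invariant under path reversal, so $\rF_\sigma=F_\sigma$. For a spectrally negative MAP every $X^j$ admits continuous passage upward and $X_\sigma=0$ automatically whenever $J_\sigma\in E$. Applying Proposition~\ref{prop:rev_last} to the dual MAP (i.e.\ with the roles of $\p$ and $\hat\p$ swapped) at $a=0$ specialises to
\[
\pi_i q_i\,\e_i\!\left[e^{-\langle\bb,\bs\sigma\rangle};J_\sigma=j\right]=\hat u_j\,\hat L^{0}_{ji}\!\left(e^{-\langle\bb,\bs\zeta\rangle}\right),
\]
so it remains to evaluate the two factors on the right.

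By the remark following Proposition~\ref{prop:rev_last}, $\hat L^{0}_{ji}(e^{-\langle\bb,\bs\zeta\rangle})$ equals $q_i$ times the one-sided resolvent density at level~$0^+$ of the $(\bb+\qq)$-killed dual MAP. For a spectrally negative MAP this density is, by definition, the matrix $\hat H^\bb$ of expected $(\bb+\qq)$-weighted local times at~$0$ (see~\cite{ivanovs_scale}). Combined with $\hat H^\bb=\Delta_\pii^{-1}(H^\bb)^T\Delta_\pii$ from~\eqref{eq:rel_hat}, this gives $\hat L^{0}_{ji}=q_i\pi_i(H^\bb)_{ij}/\pi_j$.

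For $\hat{\bs u}$, the spectrally negative identities $X_{\tau_x}=x$ a.s.\ and $\hat\p_k[J_{\tau_x}]=e^{\hat G x}$ plugged into~\eqref{eq:u} and integrated produce $\hat{\bs u}^T=-\pii\Delta_\qq\hat G^{-1}$; the relation $\hat G^T=\Delta_\pii R\Delta_\pii^{-1}$ from~\eqref{eq:rel_hat} together with the identity $\Delta_\pii^{-1}\Delta_\qq\pii^T=\qq$ then give, after transposition, $\hat{\bs u}=\Delta_\pii(-R^{-1}\qq)$, i.e.\ $\hat u_j=\pi_j(-R^{-1}\qq)_j$. Substituting both expressions in the displayed identity and dividing by $\pi_iq_i$ yields $\e_i[e^{-\langle\bb,\bs\sigma\rangle};J_\sigma=j]=(H^\bb)_{ij}(-R^{-1}\qq)_j$, which is the $(i,j)$-entry of $H^\bb\Delta_{(-R^{-1}\qq)}$.

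The main obstacle is the identification in the first step: $\hat L^{0}_{ji}$ is defined as a Lebesgue derivative at the \emph{specific} level $a=0$, not merely at almost every~$a$, and the remark after Proposition~\ref{prop:rev_last} flags the case $i=j\in E^\nearrow$ as delicate because the resolvent density has a jump at the level~$0$. Hence one must appeal to the precise characterisation of $H^\bb$ in~\cite{ivanovs_scale}—as the right-hand limit of the resolvent density at~$0$—to justify the identification; every subsequent step is routine matrix algebra based on the relations in~\eqref{eq:rel_hat}.
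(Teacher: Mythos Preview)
Your proposal is correct and follows essentially the same route as the paper's second proof (Section~\ref{sec:proof_rev}): apply Proposition~\ref{prop:rev_last} to the dual process, compute $\hat{\bs u}=-\Delta_\pii R^{-1}\qq$ from~\eqref{eq:u} and~\eqref{eq:rel_hat}, identify $\hat L^{0}(F)=\hat H^\bb\Delta_\qq$, and combine. Two small remarks: the functional should be written $F=e^{-\langle\bb,\bs\zeta\rangle}$ (you use this correctly later, but the opening line says $\bs\sigma$); and your final paragraph is overcautious---the paper sidesteps the issue at $a=0$ entirely by plugging in the explicit potential density $\hat H^\bb e^{\hat R^\bb x}$ for $x>0$ from~\cite{ivanovs_potential} and taking the limit as $\epsilon\downarrow 0$ over the interval $(0,\epsilon)$, which is continuous and unambiguous.
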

This result can be easily extended to the case of a non-defective process, i.e.\ $\qq=\bs 0$, assuming that $\mu=\e_\pii X_1>0$ which is the only non-trivial scenario. Letting $q_i=q\downarrow 0$ we immediately obtain
\[\e[e^{-\langle\bb,\bs \sigma\rangle};J_\sigma]=\mu H^\bb\Delta_{\bs r},\]
where $\bs r$ is defined in~\eqref{eq:r}.

In the case of a \levy process we may use Remark~\ref{rem:levy} to simplify the above identities:
\begin{align*}
\e^q(e^{-\b\sigma};\sigma<\zeta)&=\Phi'(\b+q)\frac{q}{\Phi(q)},\\
\e e^{-\b\sigma}&=\mu\Phi'(\b), \qquad \mu>0.
\end{align*}
The latter coincides with the result in~\cite{last_passage}; see also~\cite{ivanovs_SA} for a simple proof.

It should be noted that $\e[e^{-\langle\bb,\bs \sigma\rangle};J_\sigma]\neq \p^\bb[J_\sigma],$ because additional killing may result in a different~$\sigma<\zeta$. Recall that we do have equality of this type  for a deterministic $t$ and for $\tau_x$.

Finally, we remark that the form of the result of Proposition~\ref{prop:sigma}
is to be expected. Consider the case when all phases belong to $E^\nearrow$ and so there are finitely many visits to~0 a.s. It is easy to see that the resulting expression should be closely related to the expected number of these visits (in different phases under killing with rates $\bb$).
And indeed $H^\bb$ has this interpretation up to a certain scaling by a diagonal matrix from the right, see~\cite{ivanovs_scale}. This observation is made precise and complete in Section~\ref{sec:proof_rev}.

\subsubsection{Proof based on splitting at~$\sigma$}
The following proof heavily relies on splitting at $\sigma$, and the fact that post-$\sigma$ process has the law $\p^\uparrow$.
\begin{proof}[Proof of Proposition~\ref{prop:sigma}]
According to Corollary~\ref{cor:split_sigma} we have
\[\e[e^{-\langle\bb,\bs \sigma\rangle};J_{\sigma}]\p^{\uparrow}[e^{-\langle\bb,\bs \zeta\rangle};J_{\zeta-}]=\e[e^{-\langle\bb,\bs \zeta\rangle};X_{\zeta-}>0,J_{\zeta-}],\]
because $X_{\zeta-}>0$ if and only if $\sigma<\zeta$.
Next we compute
\begin{align*}
&\e[e^{-\langle\bb,\bs \zeta\rangle};X_{\zeta-}> 0,J_{\zeta-}]=
\int_0^\infty \e[e^{-\langle\bb,\bs t\rangle};X_{t}> 0,J_t]\D t\Delta_\qq\\
&=\int_0^\infty \p^\bb[X_{t}> 0,J_t]\D t\Delta_\qq
=\int_0^\infty H^\bb e^{R^\bb x}\D x\Delta_\qq=-H^\bb(R^\bb)^{-1}\Delta_\qq,
\end{align*}
where in the last line we used the expression for the potential density of $(X,J)$, see~\cite[Eq.\ (12)]{ivanovs_potential}.
Finally, according to Proposition~\ref{prop:spneg} we have
\[\e^\uparrow[e^{-\langle\bb,\bs \zeta\rangle};J_{\zeta-}]=\Delta^{-1}_{(R^{-1}\qq)}(R^\bb)^{-1}\Delta_\qq,\]
which completes the proof.
\end{proof}

\subsubsection{Proof based on time reversal at~$\sigma$}\label{sec:proof_rev}
The following proof relies on the identity of Proposition~\ref{prop:rev_last} and provides an explicit expression for~\eqref{eq:L} for a suitable functional~$F$.
\begin{proof}[Proof of Proposition~\ref{prop:sigma}]
Proposition~\ref{prop:rev_last} applied with $F=e^{-\langle\bs \beta,\bs \zeta\rangle}$ yields
\[\pi_iq_i\e_i(e^{-\langle\bs \beta,\bs \sigma\rangle};J_{\sigma}=j)=\hat u_j\hat L^0_{ji}(F),\]
because $\hat F_{\sigma}=F_{\sigma}=e^{-\langle\bs \beta,\bs \sigma\rangle}$.
Note that
\[\bs {\hat u}=\qq^T\Delta_\pii\int_0^\infty e^{\hat G x}\D x=-\qq^T\Delta_\pii\hat G^{-1}=-\qq^T (R^{-1})^T\Delta_\pii,\]
and 
\begin{align*}\hat L^0(F)&=\lim_{\epsilon\downarrow 0}\frac{1}{\epsilon}\hat \e[e^{-\langle\bs \beta,\bs \zeta\rangle};X_{\zeta-}\in(0,\epsilon),J_{\zeta-}]
=\lim_{\epsilon\downarrow 0}\frac{1}{\epsilon}\int_0^\infty\hat \p^\bb[X_{t}\in(0,\epsilon),J_{t}]\D t\Delta_\qq\\
&=\lim_{\epsilon\downarrow 0}\frac{1}{\epsilon}\int_0^{\epsilon}\hat H^{\bs \beta}e^{\hat R^{\bs \beta}x}\D x\Delta_\qq=\hat H^{\bs \beta}\Delta_\qq,\end{align*}
where we used the expression for the potential density again, see~\cite[Eq.\ (12)]{ivanovs_potential}. Combine these expressions and use~\eqref{eq:rel_hat} to complete the proof.
\end{proof}

\subsubsection{Proof based on splitting at the infimum}\label{sec:proof_split_inf}
Here we employ splitting at $\underline G$ and Theorem~\ref{thm:rev_first} expressing the law of the post-infimum process up to its last exit through the dual process reversed at its first passage. The analytic part of this proof is more complex and so we keep it rather brief. These ideas, but in a simpler form, were used in~\cite{ivanovs_SA} in the case of a \levy process.
\begin{proof}[Proof of Proposition~\ref{prop:sigma}]
Splitting at $\underline G$ yields
\[\e[e^{-\langle\bb,\bs \sigma\rangle};J_\sigma]=\int_{0-}^\infty\e[e^{-\langle\bs \beta,\bs {\underline G}\rangle};-\underline X\in \D x,\underline J]\e^\uparrow[e^{-\langle\bs \beta,\bs \sigma_x\rangle};J_{\sigma_x}].\]
According to Theorem~\ref{thm:rev_first} the $ji$th entry of the last matrix is given by
\[\frac{\hat u_i}{\hat u_j}\hat\e_i(e^{-\langle\bs \beta,\bs \tau_x\rangle};J_{\tau_x}=j)=\frac{\hat u_i}{\hat u_j}\left(e^{\hat G^\bb x}\right)_{ij}.\]
Using the expressions of $\hat {\bs u}$ in Section~\ref{sec:proof_rev} we obtain
\[\e^\uparrow[e^{-\langle\bs \beta,\bs \sigma_x\rangle};J_{\sigma_x}]=\Delta_{-R^{-1}\qq}^{-1}\Delta_\pii^{-1}e^{\hat {G^\bb}^T x}\Delta_\pii\Delta_{-R^{-1}\qq}=\Delta_{-R^{-1}\qq}^{-1}e^{R^\bb x}\Delta_{-R^{-1}\qq},\]
see also~\eqref{eq:rel_hat}.
Hence it is left to show that 
\[\int_{0-}^\infty\e[e^{-\langle\bs \beta,\bs {\underline G}\rangle};-\underline X\in \D x,\underline J]\Delta_{-R^{-1}\qq}^{-1}e^{R^\bb x}=H^\bb,\]
which can be achieved using the spectral method.
Take an eigenpair of $R$, i.e.~suppose that $R^\bb \bs v=-\lambda \bs v$, and multiply the left term in the above display by $\bs v$ from the right to get
\[\e[e^{-\langle\bs \beta,\bs {\underline G}\rangle+\lambda\underline X};\underline J]\Delta_{-R^{-1}\qq}^{-1}\bs v=\lim_{\ep\downarrow 0}\ep\left(\Psi^\bb(\lambda+\ep)\right)^{-1}\bs v\]
according to~\eqref{eq:WH_inf}; here we take $\ep\downarrow 0$, because $\Psi^\bb(\lambda)$ is singular. But this is exactly $H^\bb\bs v$ according to~\cite[Prop.\ 4.1]{ivanovs_risk}, which yields the result when the spectrum of $R$ is semi-simple. Finally, the general case can be treated as in~\cite{ivanovs_risk}.
\end{proof}

\subsection{The initial distribution of the conditioned process}\label{sec:initial_sn}
In this Section we study the initial distribution of $(X,J)$ under $\p_i^\downarrow$ for $i\in E^\nearrow$; recall that the absorbing state in this set-up is $(-\infty,\dagger)$. Alternatively, we may take the spectrally positive process $(-X,J)$ conditioned to stay positive. In our present setting these problems are the same since none of $X^j$ is a compound Poisson process and, moreover, the third scenario of Figure~\ref{fig:scenarios} can not hold, see Section~\ref{sec:non_pos}.

Thus we may apply Proposition~\ref{prop:init_law}, see also~\eqref{eq:Udef} for the definition of the jump measure $U(\D x)$.
First, observe that
\[\bs v(y):=\p[\overline X<y]=\bs 1-e^{G y}\bs 1,\quad y>0.\]
 Moreover, recall that $G$ is the right solution of a certain matrix integral equation, see e.g.~\cite{ivanovs_G}. In our setting this equation reads as
\begin{equation}\label{eq:FG}\int_{-\infty}^0 U_{i\cdot}(\D x)(e^{-G x}-\matI)+\Psi_{i\cdot}(0)-d_i\bs e_iG=\bs 0^T,\qquad i\in E^\nearrow,\end{equation}
where $d_i>0$ is the linear drift of $X^i$ and $\bs e_i$ is the $i$th unit vector. Multiply this equation by $-\bs 1$ from the right to find
\[c_i:=\int_{-\infty}^0 U_{i\cdot}(\D y)\bs v(-y)=-d_i\bs e_iG\bs 1-q_i.\]
Hence we have an explicit distribution of $(X_0,J_0)$ under $\p^\downarrow$:
\begin{align*}\p_i^\downarrow(X_0\in \D x;J_0=j)&=\frac{1}{-d_i\bs e_iG\bs 1}(1-\bs e_j e^{-G x}\bs 1)U_{ij}(\D x), \quad x<0.
\end{align*}

In the case of a spectrally negative \levy process of bounded variation and linear drift $d>0$ the above result simplifies to
\begin{align*}\p_i^\downarrow(X_0\in\D x;\zeta>0)=\frac{(1-e^{\Phi(q)x})\nu(\D x)}{d\Phi(q)}, \qquad x<0.\end{align*}
Suppose now that $\e X_1\geq 0$ so that $\Phi(0)=0$. Then the above expression has the following limit
\[\p_i^\downarrow(X_0\in \D x,\zeta>0)\rightarrow -x\nu(\D x)/d, \text{ as }q\downarrow 0, \qquad x<0.\]
This provides an alternative derivation of~\cite[Eq.~(2.12)]{chaumont_doney} in view of~\cite[Cor.\ 3.2]{bertoin_splitting}.

%\bibliographystyle{abbrv}
%\bibliography{MAP_WH}

\end{document}